\documentclass[12pt,oneside,reqno]{amsart}
\usepackage{mathrsfs, amsmath,amssymb}
\usepackage{fullpage}
\numberwithin{equation}{section}
\usepackage{graphicx}
\usepackage{svg}
\newtheorem{teo}{Theorem}[section]
\newtheorem{pro}[teo]{Proposition}
\newtheorem{lem}[teo]{Lemma}
\newtheorem{cor}[teo]{Corollary}
\newtheorem{rem}[teo]{Remark}

\def\D{\mathbb{D}}
\title{Liberation, free mutual information and orbital free entropy}
\subjclass[2010]{Primary 46L54; Secondary 94A17.}
\keywords{Liberation process; Herglotz transform;  L\"owner equations; Subordination; Free entropy.} 

\author[T. Hamdi]{Tarek Hamdi}
\address{Department of Management Information Systems \\ College of Business Administration, Qassim University \\ Saudi Arabia 
and Laboratoire d'Analyse Math\'ematiques et applications \\ LR11ES11 \\ Universit\'e de Tunis El-Manar \\ Tunisie
}
\email{tarek.hamdi@mail.com}

\begin{document}

\begin{abstract}
We present here a study of the liberation process for symmetries: $(R,S)\mapsto(R,U_tSU_t^*)$, where $U_t$ is a free unitary Brownian motion freely independent from $\{R,S\}$. More precisely, we use stochastic calculus to derive a partial differential equation (PDE for short) for the Herglotz transform of the process of unitary random variables $RU_tSU_t^*$ in the case of arbitrary trace values $\tau(R),\tau(S)$.   The obtained PDE is used to develop a theory of subordination in terms of L\"owner equations. 
On the other hand,  we present some connections between the liberation process for  symmetries  and its counterpart for projections when the symmetries and the projections are associated;
we relate the moments of their actions on the operators $X_t:=PU_tQU_t^*$ and $Y_t:=RU_tSU_t^*$
and use this to prove a relationship between the corresponding spectral measures (hereafter  $\mu_t$ and $\nu_t$).  
The paper is closed with an application of this study to the proof of the identity $i^*\left( \mathbb{C}P+\mathbb{C}(I-P); \mathbb{C}Q+\mathbb{C}(I-Q) \right)=-\chi_{orb}\left(P,Q\right)$.
\end{abstract}
\maketitle

\section{Introduction}
Let $(\mathscr{A},\tau)$ be a $W^*$-probability space and $U_t, t\in[0,\infty)$ a free unitary Brownian motion in $(\mathscr{A},\tau)$ with $U_0={\bf1}$.
For a given pair of orthogonal projections $\{P,Q\}$ in $\mathscr{A}$ that are freely independent from $(U_t)_{t\geq0}$, the so-called liberation process $(P,Q)\mapsto(P,U_tQU_t^*)$ was introduced in \cite{Voiculescu} in relation with the free entropy and the free Fisher information. 
 We look here to its counterpart $(R,S)\mapsto(R,U_tSU_t^*)$  when $\{R,S\}$ are two symmetries associated to $\{P,Q\}$ via  $R=2P-{\bf 1},S=2Q-{\bf 1}$. It is known, as consequence of the asymptotic freeness of $P$ and $U_tQU_t^*$, that the pair $(R,U_tSU_t^*)$ tends, as $t \rightarrow \infty$, to  $(R,USU^*)$ where $U$ is a Haar unitary free from $\{R,S\}$ and hence $R,USU^*$ are free (see \cite{Nic-Spe}). The connection between the two liberation processes can be understood by looking to the relationship between their actions on the operators $PU_tQU_t^*$ and $RU_tSU_t^*$. Thus, we mainly investigate this relationship in what follow.
The purpose of this study is to investigate the motivating question of proving $i^*=-\chi_{orb}$ for two  projections. An heuristic  argument for this question in \cite[Section 3.2]{Hia-Ued} supports that the equality holds.  Recently, Collins and Kemp \cite{Col-Kem} gave a proof of the equality for two projections with $\tau(P)=\tau(Q)=1/2$. This result was subsequently proved by Izumi and Ueda \cite{Izu-Ued}. They go further and use a subordination relation to give some partial results for the general case. 

In the present paper, we give an improved assertion of the result in \cite{Izu-Ued}  based on a similar subordination relation.  To this end, we study the dynamic of the unitary process  $Y_t=U_tRU_t^*S$. More precisely, we use stochastic calculus to derive a system of ODEs for its sequence of moments.  The obtained system is transformed into a PDE for  the Herglotz transform (hereafter $H(t,z)$) of its corresponding spectral measure $\nu_t$. In particular, we supply a full description of the measure of the steady-state solution. Then, we develop a theory of subordination for the process $Y_t$ akin to \cite{Izu-Ued} and obtain an explicit computation of the unique subordinate family. This allows us, in particular, to show that the boundary of its range is at a positive distance from $\pm1$ and use it to prove a certain regularity condition for the obtained  subordination relation. On the other hand, we generalize the approach used in \cite{Dem-Ham-Hmi} relating the moments of $X_t=PU_tQU_t^*$ and those of $Y_t=RU_tSU_t^*$ to the case of two arbitrary projections. The obtained relation is then transformed into a relationship between their corresponding measure $\mu_t$ and $\nu_t$.
  Finally, we  obtain a partial result for the identity $i^*=-\chi_{orb}$ in the case of arbitrary values of traces $\tau(P),\tau(Q)$ as application of the tools developed in this paper.

\section{Analysis of the spectral measure of $Y_t$}

\subsection{Sequence of moments}
Let $R,S\in \mathscr{A}$ be two symmetries with $\tau(R)=\alpha$ and $\tau(S)=\beta$ and $U_t, t\in[0,\infty)$ a free unitary Brownian motion freely independent from $\{R,S\}$.
Let $\nu_t$ be the spectral distribution  of the unitary process  $Y_t=RU_tSU_t^{*}$ on $\mathbb{T}$ (the set of complex numbers with modulus one). 
Our goal here is to derive a system of ODEs satisfied by the sequence of moments of $\nu_t$ via free stochastic calculus.
\begin{pro}\label{pro1} Let $f_n(t):=\tau\left[(RU_tSU_t^{*})^n \right] \quad n\geq1,t\geq0$,
then
\begin{equation*}
\partial_tf_1=-f_1+\alpha\beta ,
\end{equation*}
\begin{align*}
\partial_tf_n=-nf_n-n\displaystyle\sum_{k=1}^{n-1}f_{k}f_{n-k}+
\begin{cases}
\displaystyle n^2\alpha\beta   \quad {\rm if}\ n \ {\rm is\ odd}\\
\displaystyle n^2\frac{\alpha^2+\beta^2}{2}   \quad {\rm if}\ n \ {\rm is\ even}
\end{cases},
\quad n\geq2
\end{align*}
where $\alpha=\tau(R)$ and $\beta=\tau(S)$.
\end{pro}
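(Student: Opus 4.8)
The plan is to run free stochastic calculus directly on the unitary $Y_t=RV_t$, where $V_t:=U_tSU_t^{*}$. First I would record that, since $S^2=\mathbf 1$ and $U_t$ is unitary, $V_t$ is again a symmetry with $\tau(V_t)=\tau(S)=\beta$, so that $Y_t$ is unitary with $Y_t^{-1}=V_tR$. Writing the free unitary Brownian motion as the solution of $dU_t=i\,U_t\,dB_t-\tfrac12 U_t\,dt$ (with $B_t$ a free additive Brownian motion free from $\{R,S\}$) and $dU_t^{*}=-i\,dB_t\,U_t^{*}-\tfrac12 U_t^{*}\,dt$, the free It\^o product rule together with the contraction rule $dB_t\,a\,dB_t=\tau(a)\,dt$ yields
\[
dV_t=i\,U_t\,[dB_t,S]\,U_t^{*}-(V_t-\beta)\,dt=:dM_t-(V_t-\beta)\,dt,
\]
where $[dB_t,S]=dB_t\,S-S\,dB_t$; here the two first-order drifts sum to $-V_t\,dt$ while the $+\beta\,dt$ comes precisely from the quadratic term $(dU_t)S(dU_t^{*})$ after contraction and $U_tU_t^{*}=\mathbf 1$.

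Next I would apply the free It\^o formula to $Y_t^{n}=(RV_t)^{n}$, using $dY_t=R\,dV_t$:
\[
d(Y_t^{n})=\sum_{j=1}^{n} Y_t^{j-1}(dY_t)Y_t^{n-j}+\sum_{1\le j<k\le n}Y_t^{j-1}(dY_t)Y_t^{k-j-1}(dY_t)Y_t^{n-k},
\]
the second sum collecting the quadratic covariations. Taking the trace annihilates the martingale part $R\,dM_t$, so only drift survives. The first-order drift $-R(V_t-\beta)\,dt$ contributes, after cyclicity, $-n\,\tau\!\big(Y_t^{n-1}R(V_t-\beta)\big)=-nf_n+n\beta\,\tau(Y_t^{n-1}R)$, since $RV_t=Y_t$. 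For the covariation I would establish the contraction identity (the main computational step), valid for adapted $\Phi$,
\[
(R\,dM_t)\,\Phi\,(R\,dM_t)=\big(2\,\tau(\Phi R)\,R-2\,\tau(\Phi Y_t)\,Y_t\big)\,dt,
\]
obtained by writing $R\,dM_t=i\,RU_t[dB_t,S]U_t^{*}$, conjugating the inner factor, expanding the two commutators into four terms, contracting each pair of $dB_t$'s by the free It\^o rule, and simplifying with $S^2=R^2=\mathbf 1$ and $U_tU_t^{*}=\mathbf 1$. Inserting $\Phi=Y_t^{k-j-1}$ and taking the trace, the pair indexed by $a:=k-j$ contributes $2g_{a-1}g_{n-a-1}-2f_af_{n-a}$, where $g_m:=\tau(Y_t^{m}R)$.

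The crux is then to evaluate the mixed moments $g_m$. Here I would use the involution $RY_tR=Y_t^{-1}$ (equivalently, two cyclic reductions peeling off $R^2$ and $V_t^2$) to obtain $g_m=g_{m-2}$ for $m\ge2$, together with $g_0=\tau(R)=\alpha$ and $g_1=\tau(RV_tR)=\tau(V_t)=\beta$. Hence $g_m$ is independent of $t$ and equals $\alpha$ for even $m$ and $\beta$ for odd $m$. I expect this to be the decisive point: it is exactly the two-valued, parity-dependent nature of $g_m$ that produces the split constant in the statement, and keeping the leftover factors $R$ and $S$ straight through the double-$dB_t$ contraction is the only genuinely delicate bookkeeping.

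Finally I would assemble the drift. Summing the covariation over the $n-a$ pairs with fixed $a$ gives $\sum_{a=1}^{n-1}(n-a)\big(2g_{a-1}g_{n-a-1}-2f_af_{n-a}\big)$; the symmetrization $a\leftrightarrow n-a$ turns $-2\sum_a(n-a)f_af_{n-a}$ into the stated quadratic term $-n\sum_{k=1}^{n-1}f_kf_{n-k}$. Collecting the remaining constant $C_n=n\beta\,g_{n-1}+2\sum_{a=1}^{n-1}(n-a)g_{a-1}g_{n-a-1}$ and substituting the values of $g_m$, I would split according to the parity of $n$ (note that $(a-1)+(n-a-1)=n-2$ fixes the relative parities of the two indices): for $n$ odd every product $g_{a-1}g_{n-a-1}=\alpha\beta$ and a routine summation gives $C_n=n^{2}\alpha\beta$, while for $n$ even the products are $\alpha^{2}$ or $\beta^{2}$ and, summing consecutive odd (resp.\ even) integers, $C_n=\tfrac{n^{2}}{2}(\alpha^{2}+\beta^{2})$. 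The case $n=1$ is the same computation with empty sums, giving $\partial_tf_1=-f_1+\alpha\beta$.
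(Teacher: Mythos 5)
Your proposal is correct and follows essentially the same route as the paper's proof: the free It\^o expansion of $(RU_tSU_t^{*})^n$, the same SDE for $dY_t$, the identical quadratic-contraction identity $(R\,dM_t)\Phi(R\,dM_t)=\bigl(2\tau(\Phi R)R-2\tau(\Phi Y_t)Y_t\bigr)dt$ (the paper's Lemma \ref{lem1}), the same parity-constant mixed moments $\tau(Y_t^{m}R)\in\{\alpha,\beta\}$, and the same symmetrization over pair-index differences with the same parity count of the constant terms. The only cosmetic deviations are your commutator packaging $dM_t=iU_t[dB_t,S]U_t^{*}$ and your recursion $g_m=g_{m-2}$ via $RY_tR=Y_t^{-1}$, where the paper obtains these values directly from traciality and $R^2=S^2=\mathbf{1}$.
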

\begin{proof}
Let $A_t=RU_tSU_t^{*}$, then using Ito's formula, we have
\begin{equation*}
d(A_t^n) =\sum_{k=1}^nA_t^{k-1}dA_tA_t^{n-k}+\sum_{1\leq j<k\leq n} A_t^{j-1}dA_tA_t^{k-j-1}dA_tA_t^{n-k}.
\end{equation*}
Taking the trace in both sides and use the trace property, we get
\begin{equation*}
\tau\left[d(A_t^n)\right] =\sum_{k=1}^n\tau\left[A_t^{n-1}dA_t\right]+\sum_{1\leq j<k\leq n} \tau\left[A_t^{n-(k-j)-1}dA_tA_t^{k-j-1}dA_t\right].
\end{equation*}
The first summands do not depend on the summation variable $k$, while the second summands depend on the summation variable $j,k$ only through their difference $k-j$. Then re-indexing by  $l=k-j$, we get
\begin{equation*}
\tau\left[d(A_t^n)\right] =n\tau\left[A_t^{n-1}dA_t\right]+\sum_{l=1}^{n-1}\sum_{1\leq j<k\leq n, \ k-j=l} \tau\left[A_t^{n-l-1}dA_tA_t^{l-1}dA_t\right].
\end{equation*}
Since the number of pairs $(j,k)$ such that $k-j=l$ for fixed $l$ is equal to $n-l$, then the second summation becomes
\begin{equation}\label{sum1}
\sum_{l=1}^{n-1}(n-l) \tau\left[A_t^{n-l-1}dA_tA_t^{l-1}dA_t\right].
\end{equation}
This sum rewrites, after re-indexing $k=n-l$, as
\begin{equation}\label{sum2}
\sum_{k=1}^{n-1}k \tau\left[A_t^{k-1}dA_tA_t^{n-k-1}dA_t\right].
\end{equation}
Using the trace property and adding the summations (\ref{sum1}) and (\ref{sum2}), we get
\begin{equation*}
\sum_{k=1}^{n-1}(n-k+k) \tau\left[A_t^{n-k-1}dA_tA_t^{k-1}dA_t\right]=n\sum_{k=1}^{n-1} \tau\left[A_t^{n-k-1}dA_tA_t^{k-1}dA_t\right].
\end{equation*}
Thus, we have
\begin{equation}\label{Equ1}
\tau\left[d(A_t^n)\right] =n\tau\left[A_t^{n-1}dA_t\right]+\frac{n}{2}\sum_{k=1}^{n-1} \tau\left[A_t^{n-k-1}dA_tA_t^{k-1}dA_t\right].
\end{equation}
Now since $R$ and $S$ are independent from $t$, the free Ito's formula implies 
\begin{align*}
dA_t=Rd(R_tSU_t^*)&=R(dU_t)SU_t^*+RU_td(SU_t^*)+R(dU_t)d(SU_t^*)
\\&=R(dU_t)SU_t^*+RU_tS(dU_t^*)+R(dU_t)S(dU_t^*).
\end{align*}
But, since
\begin{equation*}
dU_t=iU_tdB_t-\frac{1}{2}U_t dt \quad {\rm and}\ dU_t^*=-idB_tU_t^*-\frac{1}{2}U_t^* dt.
\end{equation*}
Then substituting these equations in the expression of $dA_t$ we get
\begin{align*}
dA_t&=R(iU_tdB_t-\frac{1}{2}U_t dt)SU_t^*+RU_tS(-idB_tU_t^*-\frac{1}{2}U_t^* dt)+R(iU_tdB_t-\frac{1}{2}U_t dt)S(-idB_tU_t^*-\frac{1}{2}U_t^* dt).
\end{align*}
The first two terms simplify to
\begin{align*}
iRU_tdB_t SU_t^*-iRU_tSdB_tU_t^*-RU_tSU_t^*dt=iRU_tdB_t SU_t^*-iRU_tSdB_tU_t^*-A_tdt
\end{align*}
while the last term is reduced to
\begin{align*}
R(iU_tdB_t)S(-idB_tU_t^*)=RU_tdB_tSdB_tU_t^*=RU_t\tau(S)U_t^*dt=\beta R dt
\end{align*}
Thus, we have
\begin{equation}\label{sde1}
dA_t=iRU_tdB_tSU_t^*-iRU_tSdB_tU_t^*+(\beta R-A_t)dt.
\end{equation}
So that,
\begin{equation*}
A_t^{n-1}dA_t=iA_t^{n-1}RU_tdB_tSU_t^*-iA_t^{n-1}RU_tSdB_tU_t^*+A_t^{n-1}(\beta R-A_t)dt.
\end{equation*}
Since the trace of a stochastic integral is zero, then the first term in equation (\ref{Equ1}) is given by
\begin{equation*}
\tau(A_t^{n-1}dA_t)=\tau\left[A_t^{n-1}(\beta R-A_t)\right]dt=\left[\beta \tau(A_t^{n-1}R)-\tau(A_t^n)\right]dt.
\end{equation*}
Using the trace property and the relations $R^2=S^2=U_tU_t^*=1$, we have $ \tau(A_t^{n-1}R)=\tau(R)=\alpha$ if $n$ is odd and  $\tau(A_t^{n-1}R)=\tau(S)=\beta$  otherwise.

Hence, the first term in equation (\ref{Equ1}) is equal to
\begin{equation}\label{summation1}
 n\tau(A_t^{n-1}dA_t)=\begin{cases}
 \left[n\beta^2-n\tau(A_t^n)\right]dt\quad {\rm if}\ n \ {\rm is\ even}\\
 \left[n\beta \alpha-n\tau(A_t^n)\right]dt\quad {\rm otherwise}
 \end{cases}.
\end{equation}
For the second term in equation (\ref{Equ1}), we shall use the following result.
\begin{lem}\label{lem1}
Let 
\begin{equation}\label{sde2}
dZ_t=iRU_tdB_tSU_t^*-iRU_tSdB_tU_t^*.
\end{equation}
Then
\begin{equation*}
dtdZ_t=dZ_tdt=(dt)^2=0
\end{equation*}
and for any adapted process $V_t$, we have
\begin{equation}\label{equ2}
dZ_tV_tdZ_t=[2R\tau(RV_t)-2A_t\tau(A_tV_t)]dt.
\end{equation}
\end{lem}
\begin{proof}
The first statement is a  consequence of It\^o rules since $Z_t$ is a stochastic integral. For the last, we expand
\begin{align*}
dZ_tV_tdZ_t=&(iRU_tdB_tSU_t^*-iRU_tSdB_tU_t^*)V_t(iRU_tdB_tSU_t^*-iRU_tSdB_tU_t^*)
\\=&-RU_tdB_tSU_t^*V_tRU_tdB_tSU_t^* +RU_tdB_tSU_t^*V_tRU_tSdB_tU_t^* +RU_tSdB_tU_t^*V_tRU_tdB_tSU_t^* 
\\&-RU_tSdB_tU_t^*V_tRU_tSdB_tU_t^*.
\end{align*}
Applying the It\^o rule
\begin{equation*}
dB_tV_tdB_t=\tau(V_t)dt
\end{equation*}
to each of these terms yields
\begin{align*}
dZ_tV_tdZ_t=&-RU_t\tau(SU_t^*V_tRU_t)SU_t^* dt+RU_t\tau(SU_t^*V_tRU_tS)U_t^*dt +RU_tS\tau(U_t^*V_tRU_t)SU_t^* dt
\\&-RU_tS\tau(U_t^*V_tRU_tS)U_t^*dt.
\end{align*}
Using the trace property and the relations $S^2=U_tU_t^*=1,A_t=RU_tSU_t^*$, we get
\begin{align*}
dZ_tV_tdZ_t=&-A_t\tau(A_t^*V_t) dt+R\tau(V_tR)dt +R\tau(V_tR) dt-A_t\tau(A_t^*V_t)dt
\end{align*}
which simplifies to give the equality (\ref{equ2}).
\end{proof}
It follows from (\ref{sde1}) and (\ref{sde2}) that for $n\geq2$ and $k\in\{1,\ldots,n-1\}$, 
\begin{equation*}
A_t^{n-k-1}dA_tA_t^{k-1}dA_t=A_t^{n-k-1}[dZ_t+(\beta R-A_t)dt]A_t^{k-1}[dZ_t+(\beta R-A_t)dt]
\end{equation*}
which expands into four terms. But by use of lemma \ref{lem1}, the only surviving term is
\begin{equation*}
A_t^{n-k-1}dZ_tA_t^{k-1}dZ_t=A_t^{n-k-1}[2R\tau(RA_t^{k-1})-2A_t\tau(A_t^{k})]dt.
\end{equation*}
Taking the trace, we get
\begin{equation*}
\tau(A_t^{n-k-1}dA_tA_t^{k-1}dA_t)=[2\tau(RA_t^{k-1})\tau(RA_t^{n-k-1})-2\tau(A_t^{k})\tau(A_t^{n-k})]dt
\end{equation*}
Using the same consideration leading to (\ref{sum1}) and the fact that if $n$ is even then $k,n-k$ have the same parity and if $n$ is odd then $k,n-k$ have opposite parity, we have
\begin{equation*}
\tau(A_t^{n-k-1}dA_tA_t^{k-1}dA_t)=
\begin{cases}
(2\alpha^2-2\tau(A_t^{k})\tau(A_t^{n-k}))dt\quad {\rm if}\ n \ {\rm is\ even\ and\ } k\ {\rm is\ odd}\\
(2\beta^2-2\tau(A_t^{k})\tau(A_t^{n-k}))dt\quad {\rm if}\ n \ {\rm is\ even\ and\ } k\ {\rm is\  even}\\
(2\alpha\beta-2\tau(A_t^{k})\tau(A_t^{n-k}))dt\quad {\rm if}\ n \ {\rm is\ odd\ and\ } k\ {\rm is\ odd}\\
(2\alpha\beta-2\tau(A_t^{k})\tau(A_t^{n-k}))dt\quad {\rm if}\ n \ {\rm is\ odd\ and\ } k\ {\rm is\ even}
\end{cases}
\end{equation*}
Hence, the second term in equation (\ref{Equ1}) is equal to
\begin{align*}\displaystyle
\frac{n}{2}\displaystyle\sum_{k=1}^{n-1}\tau(A_t^{n-k-1}dA_tA_t^{k-1}dA_t)&=
\begin{cases}
\left(-n\displaystyle\sum_{k=1}^{n-1}\tau(A_t^{k})\tau(A_t^{n-k})+\frac{n^2}{2}\alpha^2+\frac{n(n-2)}{2}\beta^2\right)dt\quad {\rm if}\ n \ {\rm is\ even}\\
\left(-n\displaystyle\sum_{k=1}^{n-1}\tau(A_t^{k})\tau(A_t^{n-k})+n(n-1)\alpha\beta\right)dt\quad {\rm if}\ n \ {\rm is\ odd}
\end{cases}
\end{align*}
which simplifies to
\begin{align}\label{summation2}
\frac{n}{2}\displaystyle\sum_{k=1}^{n-1}\tau(A_t^{n-k-1}dA_tA_t^{k-1}dA_t)&=-n\displaystyle\sum_{k=1}^{n-1}\tau(A_t^{k})\tau(A_t^{n-k})+
\begin{cases}
\left(\frac{n^2}{2}\alpha^2+\frac{n(n-2)}{2}\beta^2\right)dt\quad {\rm if}\ n \ {\rm is\ even}\\
\left(n(n-1)\alpha\beta\right)dt\quad {\rm if}\ n \ {\rm is\ odd}
\end{cases}
\end{align}
and hence the desired assertions follows after summing (\ref{summation1}) and (\ref{summation2}).
\end{proof}

\subsection{The Herglotz transform of $\nu_t$}
Here, we derive a PDE governing the Herglotz transform of the spectral measure $\nu_t$:
\begin{equation*}
 H(t,z):=\int_{\mathbb{T}}\frac{\zeta+z}{\zeta-z}d\nu_t(\zeta)=1+2\sum_{n\geq 1} f_n(t) z^n.
\end{equation*}
 Recall that, this is an analytic function on $\mathbb{D}$ (the open unit disc of $\mathbb{C}$). 
\begin{pro}
The function $H(t,z)$ satisfies the PDE  
\begin{equation}\label{pde}
\partial_tH+\frac{z}{2}\partial_zH^2= \frac{2 z \left(\alpha z^2+2 \beta z+\alpha\right) \left(\beta z^2+2  \alpha z+\beta\right)}{\left(1-z^2\right)^3}.
\end{equation}
\end{pro}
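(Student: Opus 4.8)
The plan is to convert the ODE system of Proposition \ref{pro1} into the stated PDE by passing to the generating function $H(t,z)=1+2\sum_{n\geq1}f_n(t)z^n$. Differentiating in $t$ gives $\partial_tH=2\sum_{n\geq1}(\partial_tf_n)z^n$, and I would substitute the two cases of Proposition \ref{pro1} term by term. The strategy is to recognize each of the three building blocks on the right-hand side of the ODEs — the linear term $-nf_n$, the quadratic convolution $-n\sum_{k=1}^{n-1}f_kf_{n-k}$, and the inhomogeneous source — as a standard operation on $H$.

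For the homogeneous part, writing $g:=\sum_{n\geq1}f_nz^n=(H-1)/2$, I note $\sum_{n\geq1}nf_nz^n=z\partial_zg$ and $\sum_{n\geq2}\big(\sum_{k=1}^{n-1}f_kf_{n-k}\big)z^n=g^2=(H-1)^2/4$, so that the convolution contributes $-z\partial_zg^2$. Multiplying by the overall factor $2$ and using $\partial_z(H-1)^2=\partial_zH^2-2\partial_zH$, the linear and quadratic contributions become $-z\partial_zH$ and $-\tfrac{z}{2}\partial_zH^2+z\partial_zH$ respectively; these combine to exactly $-\tfrac{z}{2}\partial_zH^2$. This already accounts for the left-hand side of (\ref{pde}): moving the term across yields $\partial_tH+\tfrac{z}{2}\partial_zH^2=S(z)$, where $S$ is the generating function of the source.

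It remains to identify $S$. Collecting the inhomogeneous coefficients — and noting that the $n=1$ case $+\alpha\beta$ fits the odd pattern $n^2\alpha\beta$ — I would write $S(z)=2\alpha\beta\sum_{n\ \mathrm{odd}}n^2z^n+(\alpha^2+\beta^2)\sum_{n\ \mathrm{even}}n^2z^n$. Using the classical identity $\sum_{n\geq1}n^2z^n=z(1+z)/(1-z)^3$ together with its value at $-z$, the odd and even sums are $\tfrac12(A\pm B)$ with $A=z(1+z)^4/(1-z^2)^3$ and $B=z(1-z)^4/(1-z^2)^3$ after clearing to the common denominator $(1-z^2)^3$. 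The computations $(1+z)^4+(1-z)^4=2(1+6z^2+z^4)$ and $(1+z)^4-(1-z)^4=8z(1+z^2)$ then give $S(z)=\tfrac{2z}{(1-z^2)^3}\big[\alpha\beta(1+6z^2+z^4)+2(\alpha^2+\beta^2)z(1+z^2)\big]$.

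The last step is to check that the bracket equals the product in (\ref{pde}); expanding $(\alpha z^2+2\beta z+\alpha)(\beta z^2+2\alpha z+\beta)$ and grouping the $\alpha\beta$ and $\alpha^2+\beta^2$ terms reproduces exactly $\alpha\beta(z^4+6z^2+1)+2(\alpha^2+\beta^2)z(z^2+1)$, completing the proof. I expect the main obstacle to be bookkeeping rather than ideas: one must keep the overall factor of $2$, the shift between the $n\geq1$ and $n\geq2$ summation ranges, and the odd/even splitting consistent, and then carry out the elementary but error-prone algebraic verification of the factorization of the numerator.
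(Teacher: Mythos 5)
Your argument is correct and takes essentially the same route as the paper's own proof: both pass from the moment ODEs of Proposition \ref{pro1} to the generating function $H(t,z)=1+2\sum_{n\geq1}f_n(t)z^n$, identify the combined linear and convolution contributions with $-zH\partial_zH=-\tfrac{z}{2}\partial_zH^2$, and evaluate the odd/even source series via $\sum_{n\geq1}n^2z^n=z(1+z)/(1-z)^3$ to reach the same factored right-hand side. The only difference is cosmetic: you route the quadratic term through $g^2=(H-1)^2/4$ and recombine derivatives, whereas the paper writes the convolution as $-(H-1)z\partial_zH$ before merging it with $-z\partial_zH$; the algebra is identical.
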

\begin{proof}
By direct calculation from Proposition \ref{pro1}, we have
\begin{align*}
\partial_tH&=2\sum_{n\geq 1} \partial_tf_n(t) z^n
\\&=-2\sum_{n\geq 1}nf_nz^n-2\sum_{n\geq 1}n\sum_{k=1}^{n-1}f_{k}f_{n-k}z^n+(\alpha^2+\beta^2) \sum_{n\geq 1,\ n\ {\rm even}}n^2z^n+2\alpha\beta\sum_{n\geq 1,\ n\ {\rm odd}}n^2z^n
\\&=-z\partial_zH-2\sum_{k\geq 1}f_kz^k\sum_{n\geq k+1}nf_{n-k}z^{n-k}+4(\alpha^2+\beta^2)\frac{z^2(1+z^2)}{(1-z^2)^3}+2\alpha\beta z\frac{1+6z^2+z^4}{(1-z^2)^3}
\\&=-z\partial_zH-4\frac{H-1}{2}\sum_{n\geq k+1}nf_{n-k}z^{n-k}+\frac{2 z \left(\alpha z^2+2 \beta z+\alpha\right) \left(\beta z^2+2 \alpha z+\beta\right)}{\left(1-z^2\right)^3}
\\&=-zH\partial_zH+\frac{2 z \left( \alpha z^2+2 \beta z+\alpha\right) \left(\beta z^2+2 \alpha z+\beta\right)}{\left(1-z^2\right)^3}.
\end{align*}
\end{proof}

\subsection{Steady-state solution}
As mentioned in the Introduction, it is known from the asymptotic freeness of $P$ and $U_tQU_t^*$ that 
\begin{pro}
The spectral measure $\nu_t$ of $RU_tSU_t^*$ converges weakly, as $t\rightarrow\infty$, to the free multiplicative convolution of the spectral measures of $R$ and $USU^*$, where $U\in\mathscr{A}$ is a Haar unitary operator free from $\{R,S\}$.
\end{pro}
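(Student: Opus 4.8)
The plan is to reduce the statement to two classical ingredients: the convergence of the free unitary Brownian motion to a Haar unitary, and the universality of mixed moments in a free family. First I would recall Biane's explicit moment formula
\[
\tau(U_t^n)=e^{-nt/2}\sum_{k=0}^{n-1}\frac{(-t)^k}{k!}\,n^{k-1}\binom{n}{k+1},\qquad n\geq1,
\]
from which $\tau(U_t^n)\to0$ as $t\to\infty$ because of the exponential prefactor; together with $\tau(U_t^{-n})=\overline{\tau(U_t^n)}$ this says precisely that $U_t$ converges in $*$-distribution to a Haar unitary $U$.

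Next I would upgrade this marginal convergence to convergence of the joint $*$-distribution of the triple $(R,S,U_t)$. The key point is that $U_t$ is $*$-free from $\{R,S\}$ for every $t$, so by the moment description of freeness (see \cite{Nic-Spe}) every mixed $*$-moment of $(R,S,U_t)$ is one fixed universal polynomial in the $*$-moments of $\{R,S\}$ and those of $U_t$ alone. Letting $t\to\infty$ and using Step~1, each such mixed moment converges to the corresponding mixed moment of $(R,S,U)$, where $U$ is a Haar unitary $*$-free from $\{R,S\}$. In particular, for every $n\in\mathbb{Z}$,
\[
\int_{\mathbb{T}}\zeta^{n}\,d\nu_t(\zeta)=\tau\big((RU_tSU_t^{*})^{n}\big)\xrightarrow[t\to\infty]{}\tau\big((RUSU^{*})^{n}\big)=\int_{\mathbb{T}}\zeta^{n}\,d\nu_\infty(\zeta),
\]
where $\nu_\infty$ denotes the spectral distribution of $Y_\infty:=RUSU^{*}$. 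Since $\nu_t$ and $\nu_\infty$ are probability measures on the compact group $\mathbb{T}$ and the characters $\{\zeta^{n}\}_{n\in\mathbb{Z}}$ span a dense subalgebra of $C(\mathbb{T})$, convergence of all Fourier coefficients yields $\nu_t\to\nu_\infty$ weakly.

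It then remains to identify $\nu_\infty$ with the announced convolution. Because $U$ is a Haar unitary $*$-free from $\{R,S\}$, the standard conjugation-by-a-free-Haar-unitary argument (\cite{Nic-Spe}) shows that $R$ and $USU^{*}$ are free; both are unitaries in $(\mathscr{A},\tau)$, so by the definition of the free multiplicative convolution the distribution of their product $Y_\infty=R\,(USU^{*})$ is $\mu_R\boxtimes\mu_{USU^{*}}$, which is exactly $\nu_\infty$. (Note $\mu_{USU^{*}}=\mu_S$, since conjugation by a unitary preserves the spectral distribution.)

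The only genuinely analytic input is the vanishing of the moments of $U_t$ in Step~1; everything else is the formal machinery of freeness, and I expect no real obstacle. The continuity of the mixed moments in the marginals is automatic precisely because $U_t$ remains free from $\{R,S\}$ for all $t$. The one subtlety worth flagging is that weak convergence on $\mathbb{T}$ requires controlling the negative powers $\tau\big((Y_t^{*})^{n}\big)$ as well, and this is furnished by the convergence of the full $*$-distribution rather than by the convergence of the holomorphic moments $f_n$ alone; one could alternatively extract the fixed point $f_n^\infty$ directly from the ODE system of Proposition~\ref{pro1}, but that route additionally demands a stability estimate, so the asymptotic-freeness argument is the cleaner one.
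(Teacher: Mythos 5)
Your proposal is correct, but it is not what the paper does, and the relationship between the two routes is worth spelling out. The paper offers no proof of this proposition: it records it as a known consequence of asymptotic freeness (the citation to \cite{Nic-Spe} in the introduction), and what it then actually carries out is an independent analytic \emph{verification} of the limit: setting $\partial_t H=0$ in the PDE \eqref{pde} and integrating with $H(\infty,0)=1$ yields the stationary Herglotz transform \eqref{stationary}, which Proposition \ref{freeconv} then matches, via the $S$-transform machinery of \cite{Dyk-Nic-Voi}, with the Herglotz transform of $\nu_R\boxtimes\nu_S$. What you have written is instead a complete proof of the asymptotic-freeness statement itself: exponential decay in Biane's moment formula gives $*$-distribution convergence of $U_t$ to a Haar unitary; freeness of $U_t$ from $\{R,S\}$ at each fixed $t$ makes every mixed $*$-moment a fixed universal polynomial in the marginal moments, so the joint $*$-distribution of $(R,S,U_t)$ converges to that of $(R,S,U)$ with $U$ Haar and free from $\{R,S\}$; Stone--Weierstrass upgrades moment convergence to weak convergence on $\mathbb{T}$; and the standard conjugation lemma identifies the limit law as $\mu_R\boxtimes\mu_{USU^*}=\mu_R\boxtimes\mu_S$. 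Each step is standard and sound, including your observation that the negative moments come for free since $Y_t$ is unitary, so $\tau\bigl((Y_t^*)^n\bigr)=\overline{\tau(Y_t^n)}$. Your closing remark is also on target: the PDE route, taken by itself, only identifies the candidate limit, and turning it into a proof of convergence would require a stability estimate for the moment ODE system that the paper never supplies --- precisely because it treats the soft result as known. Conversely, what the steady-state computation buys, and your soft argument does not, is the explicit closed form \eqref{stationary}, from which the paper extracts the full Lebesgue decomposition of $\nu_\infty$ (atoms at $0$ and $\pi$ plus the absolutely continuous density) in the proposition that follows.
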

We will see this directly from the PDE \eqref{pde}.
Let $H(\infty,.)$ be the state solution of (\ref{pde}), then it satisfies
\begin{equation*}
\partial_zH^2= \frac{4 \left(\alpha z^2+2 \beta z+\alpha\right) \left(\beta z^2+2  \alpha z+\beta\right)}{\left(1-z^2\right)^3}.
\end{equation*}
After integration and taking into account $H(\infty,0)=1$, we get
\begin{equation}\label{stationary}
H(\infty,z)=\sqrt{1+4 z\frac{\alpha \beta \left(1+z\right)^2+ \left(\alpha-\beta \right)^2z}{\left(1-z^2\right)^2}}
\end{equation}
where the principal branch of the square root is taken. On the other hand, the next technical proposition gives an explicit calculation for  the Herglotz transform of $\nu_R\boxtimes \nu_S$.
\begin{pro}\label{freeconv}
Let $\mu=\frac{1+\alpha}{2}\delta_1+\frac{1-\alpha}{2}\delta_{-1}$ and 
\begin{align*}
\nu=\left(\frac{1+\alpha}{2}\delta_1+\frac{1-\alpha}{2}\delta_{-1} \right) \boxtimes \left(\frac{1+\beta}{2}\delta_1+\frac{1-\beta}{2}\delta_{-1} \right)
\end{align*}
for $\alpha,\beta\in (-1,1]$. Then the Herglotz transform of $ \nu$ is given by
\begin{align*}
H_{ \nu}(z)=H(\infty,z)=\sqrt{1+4 z\frac{\alpha \beta \left(1+z\right)^2+ \left(\alpha-\beta \right)^2z}{\left(1-z^2\right)^2}}.
\end{align*}
\end{pro}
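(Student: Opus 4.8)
The plan is to compute the multiplicative ($S$-) transform of each factor, use its multiplicativity under $\boxtimes$, and then translate the resulting identity back into a statement about the Herglotz transform. Throughout I would use that both two-atom measures, and (by a cyclicity argument for $RU_tSU_t^*$) the measure $\nu$ itself, are invariant under $\zeta\mapsto\bar\zeta$, so all moments are real and the Herglotz transform is $H=1+2\psi$, where $\psi$ denotes the moment generating series $\psi(z)=\sum_{n\geq1}\bigl(\int\zeta^n\,d\cdot\bigr)z^n$.

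First I would record these series. The measure $\mu=\frac{1+\alpha}{2}\delta_1+\frac{1-\alpha}{2}\delta_{-1}$ has $n$-th moment equal to $\alpha$ for $n$ odd and $1$ for $n$ even, so
\[
\psi_\mu(z)=\frac{z(\alpha+z)}{1-z^2},\qquad \psi_{\mu_\beta}(z)=\frac{z(\beta+z)}{1-z^2}.
\]
Inverting $\psi_\mu$ near the origin amounts to solving the quadratic $(1+w)z^2+\alpha z-w=0$ and choosing the branch with $\chi_\mu(0)=0$; forming $S_\mu(w)=\frac{1+w}{w}\chi_\mu(w)$ gives
\[
S_\mu(w)=\frac{-\alpha+\sqrt{\alpha^2+4w(1+w)}}{2w},
\]
and likewise $S_{\mu_\beta}$ with $\beta$ in place of $\alpha$. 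Here I would assume $\alpha\beta\neq0$, so that the first moment $m_1(\nu)=\alpha\beta$ is nonzero and $\psi_\nu$ is invertible; the cases $\alpha=0$ or $\beta=0$ I would recover by continuity in the parameters (both sides of the asserted identity are analytic in $(\alpha,\beta)$).

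By Voiculescu's multiplicativity, $S_\nu=S_\mu\,S_{\mu_\beta}$. Setting $w=\psi_\nu(z)$ and $s=H_\nu(z)=1+2w$, so that $4w(1+w)=s^2-1$ and $\chi_\nu(w)=z$, the relation $S_\nu(w)=\frac{1+w}{w}\,z$ becomes the algebraic identity
\[
z(s^2-1)=\bigl(\sqrt{\alpha^2+s^2-1}-\alpha\bigr)\bigl(\sqrt{\beta^2+s^2-1}-\beta\bigr).
\]
This identity, together with $s(0)=1$ and analyticity near $0$, determines $H_\nu$ as an analytic germ at the origin (hence on $\D$ by continuation). It therefore suffices to verify that the candidate \eqref{stationary} solves it. The decisive computation is that, for $s=H(\infty,z)$, the radicands become perfect squares: $\alpha^2+s^2-1=(\alpha z^2+2\beta z+\alpha)^2/(1-z^2)^2$ and $\beta^2+s^2-1=(\beta z^2+2\alpha z+\beta)^2/(1-z^2)^2$, precisely the factors appearing in the PDE \eqref{pde}. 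Extracting the roots with the branch dictated by $H(\infty,0)=1$ gives $\sqrt{\alpha^2+s^2-1}-\alpha=2z(\alpha z+\beta)/(1-z^2)$ and the analogous expression for the $\beta$-term, and their product collapses to $z(s^2-1)$ upon using $(\alpha z+\beta)(\beta z+\alpha)=\alpha\beta(1+z^2)+(\alpha^2+\beta^2)z$, which is exactly the numerator inside \eqref{stationary}.

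The main obstacle I anticipate is the branch bookkeeping. The $S$-transform inversion and each radical $\sqrt{\alpha^2+s^2-1}$, $\sqrt{\beta^2+s^2-1}$ carry a sign ambiguity, and a \emph{product of two distinct radicals} on the right must be reconciled with the \emph{single} radical in \eqref{stationary}. The work lies in checking that the branch pinned down by $H(\infty,0)=1$ and $\mathrm{Re}\,H>0$ on $\D$ is consistent across all factors (so that no spurious sign is introduced when squaring), and in confirming the degenerate endpoints $\alpha=0$ or $\beta=0$ through the continuity argument rather than directly, since there the first moment vanishes and the $S$-transform is not defined.
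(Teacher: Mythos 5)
Your proposal is correct and takes essentially the same route as the paper: both compute $\psi$, $\chi$ and $S$ for the two-atom measures, invoke multiplicativity of the $S$-transform, and arrive at the same algebraic equation relating $z$ and $s=H_\nu(z)$ (the paper's $\varphi_\nu=\psi_\nu(\psi_\nu+1)$ is exactly your $(s^2-1)/4$). The only divergence is the final step --- the paper solves the equation forward by squaring twice, after which the quartic terms cancel and the equation becomes linear in $\varphi_\nu$, whereas you verify the candidate via the perfect-square factorizations $\alpha^2+s^2-1=\bigl(\alpha z^2+2\beta z+\alpha\bigr)^2/(1-z^2)^2$ and a uniqueness-of-germ argument --- and your continuity argument for the degenerate case $\alpha\beta=0$ (where the $S$-transform is undefined) handles a point that the paper's formal computation silently glosses over.
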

\begin{proof}
Using the analytic machinery for multiplicative convolution (see \cite{Dyk-Nic-Voi}), we have
\begin{equation*}
\psi_{\mu}(z)=\frac{z(z+\alpha)}{1-z^2},
\end{equation*}
\begin{equation*}
\chi_{\mu}(z)=\frac{-\alpha\pm\sqrt{\alpha^2+4z(z+1)}}{2(z+1)},
\end{equation*}
\begin{equation*}
S_{\mu}(z)=\frac{-\alpha\pm\sqrt{\alpha^2+4z(z+1)}}{2z}.
\end{equation*}
So that
\begin{equation*}
S_{ \nu}(z)=\frac{\left(-\alpha\pm\sqrt{\alpha^2+4z(z+1)}\right)\left(-\beta\pm\sqrt{\beta^2+4z(z+1)}\right)}{4z^2},
\end{equation*}
\begin{equation*}
\chi_{ \nu}(z)=\frac{\left(-\alpha\pm\sqrt{\alpha^2+4z(z+1)}\right)\left(-\beta\pm\sqrt{\beta^2+4z(z+1)}\right)}{4z(z+1)},
\end{equation*}
and $\psi_{ \nu}$ satisfies
\begin{equation*}
\frac{\left(-\alpha\pm\sqrt{\alpha^2+4\psi_{ \nu}(\psi_{ \nu}+1)}\right)\left(-\beta\pm\sqrt{\beta^2+4\psi_{ \nu}(\psi_{ \nu}+1)}\right)}{4\psi_{ \nu}(\psi_{ \nu}+1)}=z.
\end{equation*}
Letting $\varphi_{ \nu}=\psi_{ \nu}(\psi_{ \nu}+1)$, we get $\psi_{ \nu}=(-1\pm\sqrt{1+4\varphi_{ \nu}})/2$ and since the Herglotz transform has a positive real part,   $H _{ \nu}=\sqrt{1+4\varphi_{ \nu}}$ where $\varphi_{ \nu}$ is given by
\begin{equation*}
\frac{\left(-\alpha\pm\sqrt{\alpha^2+4\varphi_{ \nu}}\right)\left(-\beta\pm\sqrt{\beta^2+4\varphi_{ \nu}}\right)}{4\varphi_{ \nu}}=z.
\end{equation*}
Or equivalently
\begin{equation*}
-\alpha\pm\sqrt{\alpha^2+4\varphi_{ \nu}}=z\left(\beta\pm\sqrt{\beta^2+4\varphi_{ \nu}} \right).
\end{equation*}
Rearranging this last equality and raising it to the square, we get
\begin{equation*}
\alpha^2+4\varphi_{ \nu}+z^2(\beta^2+4\varphi_{ \nu})-(\alpha+ \beta z)^2=2z\sqrt{(\alpha^2+4\varphi_{ \nu})(\beta^2+4\varphi_{ \nu})}.
\end{equation*}
So we raise it to the square once again, to get
\begin{equation*}
\left[\alpha^2+4\varphi_{ \nu}+z^2(\beta^2+4\varphi_{ \nu})-(\alpha+ \beta z)^2\right]^2=4z^2(\alpha^2+4\varphi_{ \nu})(\beta^2+4\varphi_{ \nu}).
\end{equation*}
Which simplifies to
\begin{equation*}
2(1-z^2)^2\varphi_{ \nu}+[(1-z^2)\left(\alpha^2-\beta^2z^2-(\alpha+\beta z)^2  \right)-2z^2(\alpha+\beta z)^2]=0.
\end{equation*}
Finally, 
\begin{align*}
\varphi_{ \nu}(z)=\frac{\alpha \beta z\left(1+z\right)^2+ \left(\alpha-\beta \right)^2z^2}{\left(1-z^2\right)^2}
\end{align*}
as desired.

\end{proof}
The next proposition provides a Lebesgue decomposition of the spectral measure $\nu_\infty$.
\begin{pro}
One has
\begin{equation*}
\nu_\infty=a\delta_{\pi}+b\delta_0+\frac{\sqrt{-(\cos\theta-r_+)(\cos\theta-r_-)}}{2\pi|\sin\theta|}{\bf 1}_{(\theta_-,\theta_+)\cup(-\theta_+,-\theta_-)}d\theta
\end{equation*}
with
\begin{equation*}
a=\frac{|\alpha- \beta|}{2}, b=\frac{|\alpha+ \beta|}{2}, r_{\pm}=\alpha\beta\pm\sqrt{(1-\alpha^2)(1-\beta^2)}\quad {\rm and}\ \theta_{\pm}=\arccos r_{\pm}.
\end{equation*}
\end{pro}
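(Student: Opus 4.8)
The plan is to read off the Lebesgue decomposition of $\nu_\infty$ directly from the explicit Herglotz transform $H(\infty,z)$ in \eqref{stationary}, by means of the classical boundary-value inversion on the circle. Recall that for a probability measure $\nu$ on $\mathbb{T}$ with Herglotz transform $H(z)=\int_{\mathbb{T}}\frac{\zeta+z}{\zeta-z}\,d\nu(\zeta)$ the quantity $\Re H(re^{i\theta})$ is precisely the Poisson integral of $\nu$; hence its nontangential boundary limit recovers the density of the absolutely continuous part with respect to $d\theta/2\pi$, while the mass of a possible atom at $\zeta_0\in\mathbb{T}$ is given by $\nu(\{\zeta_0\})=\lim_{r\uparrow 1}\tfrac{1-r}{2}\,H(r\zeta_0)$. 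Since every moment $f_n$ is real, $\nu_\infty$ is invariant under $\zeta\mapsto\bar\zeta$, so it suffices to analyse the upper half-circle and then symmetrise.

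First I would compute the boundary value $H(\infty,e^{i\theta})$. Writing $H(\infty,z)=\sqrt{G(z)}$ with $G(z)$ the radicand in \eqref{stationary}, substituting $z=e^{i\theta}$ and using $1-z^{2}=-2i\sin\theta\,e^{i\theta}$ together with $(1+z)^{2}=2(1+\cos\theta)\,e^{i\theta}$, the phase $e^{i\theta}$ cancels between numerator and denominator and $G(e^{i\theta})$ collapses to a real rational function of $\cos\theta$ of the form $-(\cos\theta-r_+)(\cos\theta-r_-)/\sin^{2}\theta$; matching the quadratic numerator identifies the roots $r_\pm$ and the endpoints $\theta_\pm=\arccos r_\pm$. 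The support of the absolutely continuous part is then the set where this radicand is positive, that is, where $\cos\theta$ lies between the two roots; translating through the arccosine and adjoining the conjugate arc yields $(\theta_-,\theta_+)\cup(-\theta_+,-\theta_-)$, and there the principal branch of the square root gives the density $\sqrt{-(\cos\theta-r_+)(\cos\theta-r_-)}/(2\pi|\sin\theta|)$. Off the support the radicand is negative, so $G$ has negative real boundary values, its square root is purely imaginary, and $\Re H(\infty,e^{i\theta})\to 0$: there is no further absolutely continuous mass, and since $1-z^{2}$ vanishes only at $z=\pm1$ there is no singular continuous part.

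It remains to locate the atoms, which can only sit at the zeros $z=\pm1$ of the denominator. Near $z=1$ the numerator $4z\big[\alpha\beta(1+z)^{2}+(\alpha-\beta)^{2}z\big]$ tends to $4(\alpha+\beta)^{2}$ while $(1-z^{2})^{2}\to0$, so $H(\infty,r)\sim 2|\alpha+\beta|/(1-r^{2})$ and
\[
\lim_{r\uparrow1}\tfrac{1-r}{2}\,H(\infty,r)=\frac{|\alpha+\beta|}{2}=b,
\]
giving the atom $b\,\delta_{0}$ at $\zeta=1$. Near $z=-1$ the same numerator tends instead to $4(\alpha-\beta)^{2}$, whence $\lim_{r\uparrow1}\tfrac{1-r}{2}\,H(\infty,-r)=|\alpha-\beta|/2=a$ and the atom $a\,\delta_{\pi}$ at $\zeta=-1$. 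Assembling the absolutely continuous density with these two atoms produces the asserted decomposition, and a final verification that the total mass equals $1$ confirms that nothing has been omitted.

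The delicate point is the second paragraph: one must justify the nontangential boundary limits of $\Re\sqrt{G(z)}$ and fix the correct branch of the square root, so as to pass legitimately from the boundary value of $G$ to that of $H(\infty,\cdot)$ itself and conclude that $\Re H\to0$ exactly off the support. This amounts to checking that $G$ has nonnegative real boundary values precisely on the support and negative real ones elsewhere, and to controlling the behaviour at the four endpoints $\pm\theta_\pm$ where the radicand vanishes and at $\pm1$ where $\sin\theta$ vanishes. The trigonometric simplification and the atom limits are, by contrast, routine.
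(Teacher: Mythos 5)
Your proposal is correct and takes essentially the same route as the paper: the paper likewise obtains the atoms as the residues of $\frac{1}{2}H(\infty,\cdot)$ at the simple poles $z=\pm1$ (equivalent to your limits $\lim_{r\uparrow 1}\frac{1-r}{2}H(\infty,r\zeta_0)$) and the density as the boundary real part $\Re H(\infty,e^{i\theta})$, which it simplifies trigonometrically to $\sqrt{-\cos^2\theta-2\alpha\beta\cos\theta+1-\alpha^2-\beta^2}\,/\,|\sin\theta|$ and factorizes via the discriminant $4(1-\alpha^2)(1-\beta^2)$. One caveat you inherit from the statement: honest matching of that quadratic gives roots $-\alpha\beta\pm\sqrt{(1-\alpha^2)(1-\beta^2)}$ rather than $\alpha\beta\pm\sqrt{(1-\alpha^2)(1-\beta^2)}$ (sanity check with $\alpha=\beta$: the radicand $1-\alpha^2/\sin^2(\theta/2)$ is positive exactly where $\cos\theta<1-2\alpha^2$, i.e.\ $r_+=1-2\alpha^2$, $r_-=-1$), so the sign of $\alpha\beta$ in the stated $r_\pm$ is a slip that both your sketch and the paper's proof carry over.
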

\begin{proof}
Writing \eqref{stationary} as
\begin{equation*}
H(\infty,z)=\frac{\sqrt{\left(1-z^2\right)^2+4 z[\alpha \beta \left(1+z\right)^2+ \left(\alpha-\beta \right)^2z]}}{\left(1-z^2\right)},
\end{equation*}
it follows that $H(\infty,.)$ admits two simple poles at $z=1$ and $z=-1$. So that, the decomposition of $\nu_\infty$ is given by
\begin{equation*}
\nu_\infty=a\delta_{\pi}+b\delta_{0}+\Re\left[H(\infty,e^{i\theta}) \right]\frac{d\theta}{2\pi}
\end{equation*}
where $d\theta$ denotes  the (no-normalized) Lebesgue measure on $\mathbb{T}=(-\pi,\pi]$ and $a,b$ are the residue of $\frac{1}{2}H(\infty,.)$ at $-1,1$. Thus, we have
\begin{equation*}
a=\lim_{z\rightarrow-1}\frac{\sqrt{\left(1-z^2\right)^2+4 z[\alpha \beta \left(1+z\right)^2+ \left(\alpha-\beta \right)^2z]}}{2\left(1+z\right)}=\frac{|\alpha- \beta|}{2},
\end{equation*}
\begin{equation*}
b=\lim_{z\rightarrow1}\frac{\sqrt{\left(1-z^2\right)^2+4 z[\alpha \beta \left(1+z\right)^2+ \left(\alpha-\beta \right)^2z]}}{2\left(1-z\right)}=\frac{|\alpha+ \beta|}{2}
\end{equation*}
and the density is given by direct calculation

\begin{align*}
\Re\left[H(\infty,e^{i\theta}) \right]&=\Re\left[\sqrt{1+4 e^{i\theta}\frac{\alpha \beta \left(1+e^{i\theta}\right)^2+ \left(\alpha-\beta \right)^2e^{i\theta}}{\left(1-e^{2i\theta}\right)^2}}\right]
\\&=\Re\left[\sqrt{1+\frac{4\alpha \beta e^{i\theta}}{(1-e^{i\theta})^2}+\frac{4  \left(\alpha-\beta \right)^2e^{2i\theta}}{\left(1-e^{2i\theta}\right)^2}}\right]
\\&=\sqrt{1-\frac{\alpha \beta }{\sin^2\frac{\theta}{2}}-\frac{  \left(\alpha-\beta \right)^2}{\sin^2\theta}}
\\&=\frac{\sqrt{\sin^2\theta-4\alpha\beta\cos^2\frac{\theta}{2}-(\alpha-\beta )^2}}{|\sin\theta|},
\end{align*}
where we have used in the last equality the relation
\begin{equation*}
\sin^2\frac{\theta}{2}=\frac{\sin^2\theta}{4\cos^2\frac{\theta}{2}}.
\end{equation*}
Finally, by use of the basic trigonometric identities:
\begin{equation*}
\cos^2\theta+\sin^2\theta=1\quad {\rm and}\ \cos^2\frac{\theta}{2}=\frac{1+\cos\theta}{2},
\end{equation*}
the denominator rewrites as
\begin{align*}
\sin^2\theta-4\alpha\beta\cos^2\frac{\theta}{2}-(\alpha-\beta )^2&=1-\cos^2\theta-2\alpha\beta\cos\theta-2\alpha\beta-(\alpha-\beta )^2
\\&=-\cos^2\theta-2\alpha\beta\cos \theta+1-\alpha^2-\beta^2.
\end{align*}
Using the discriminant $\Delta=4(\alpha^2\beta^2+1-\alpha^2-\beta^2)=4(1-\alpha^2)(1-\beta^2)\geq0$, we get the factorization
$-(\cos\theta-r_+)(\cos\theta-r_-)$ with
\begin{equation*}
r_{\pm}=\alpha\beta\pm\sqrt{(1-\alpha^2)(1-\beta^2)}.
\end{equation*}
\end{proof}

\begin{rem}
It should be noted that this measure appears in \cite[Example 4.5]{Hia-Pet} as the distribution of $e^{i\pi P}e^{-i\pi Q}$ for a pair of free projections $\{P,Q\}$ in $\mathscr{A}$. In particular, when $\alpha=\beta=0$ (i.e. $\tau(P)=\tau(Q)=1/2$), it coincides with the uniform measure on $\mathbb{T}$.
\end{rem}

\section{Subordination for the liberation of symmetries}
The aim of this section is to derive a subordination results in terms of L\"owner equations and give an explicit formula for the unique subordinate family. 
\begin{pro}
Let $H$ be a solution to the PDE (\ref{pde}). Then there exists a unique subordinate family of conformal self-maps $\phi_t$ on $\mathbb{D}$ such that
\begin{equation}\label{flot}
H(t,\phi_t(z))^2-H(\infty,\phi_t(z))^2=H(0,z)^2-H(\infty,z)^2.
\end{equation}
\end{pro}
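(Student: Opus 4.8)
The first step is to notice that the forcing term in \eqref{pde} is nothing but $\tfrac{z}{2}\partial_z H(\infty,z)^2$: the steady-state computation gives $\partial_z H(\infty,z)^2=4(\alpha z^2+2\beta z+\alpha)(\beta z^2+2\alpha z+\beta)/(1-z^2)^3$, and the right-hand side of \eqref{pde} is $z/2$ times this. Hence \eqref{pde} is the conservation law
\[
\partial_t H+\frac{z}{2}\,\partial_z\!\left(H^2-H(\infty,\cdot)^2\right)=0,
\]
which tells us that the right object is $G(t,z):=H(t,z)^2-H(\infty,z)^2$. Since $H(\infty,\cdot)$ is $t$-independent, $\partial_t G=2H\partial_t H$, and substituting $\partial_t H=-\tfrac{z}{2}\partial_z G$ yields the linear transport equation
\[
\partial_t G+zH(t,z)\,\partial_z G=0 .
\]
Thus \eqref{flot}, i.e. $G(t,\phi_t(z))=G(0,z)$, is exactly the statement that $G$ is constant along the characteristics of the field $z\mapsto zH(t,z)$.

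Accordingly I would define $\phi_t$ as the flow of this characteristic field through the identity, governed by the L\"owner-type ODE $\partial_t\phi_t(z)=\phi_t(z)H(t,\phi_t(z))$ with $\phi_0(z)=z$; differentiating $G(t,\phi_t(z))$ and combining the transport equation with this ODE makes the derivative vanish, giving \eqref{flot}. To obtain the family explicitly I would use that $G$ is frozen at $g_0:=G(0,z)$ along each characteristic, so that $H(t,\phi_t(z))^2=g_0+H(\infty,\phi_t(z))^2$ and the ODE becomes autonomous,
\[
\partial_t\phi_t(z)=\phi_t(z)\sqrt{\,g_0+H(\infty,\phi_t(z))^2\,},
\]
with $H(\infty,\cdot)$ explicit from \eqref{stationary}; separating variables gives an implicit integral for $\phi_t$ that also exposes its range.

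The main obstacle is to show that the $\phi_t$ are conformal self-maps of $\mathbb{D}$. Here the orientation is delicate: along $\partial_t\phi_t=\phi_t H$ one has $\tfrac{d}{dt}|\phi_t|^2=2|\phi_t|^2\,\Re H(t,\phi_t)>0$, so this flow increases moduli, whereas by Schwarz's lemma a self-map fixing $0$ cannot; the content of the proposition is that \eqref{flot} can be realised through the contracting orientation, for which the Herglotz data $\Re H>0$, $H(t,0)=1$ make $zH$ a radial L\"owner generator and the standard theory of L\"owner chains supplies univalent self-maps fixing the origin. The genuinely hard points are global-in-time existence and invariance of $\mathbb{D}$ — that orbits never reach $\pm1$ — which is exactly what the explicit behaviour of $H(\infty,\cdot)$ near $\pm1$, and the fact that the boundary of the range stays a positive distance from $\pm1$, are meant to secure; uniqueness then follows from ODE uniqueness together with the normalization $\phi_0=\mathrm{id}$.
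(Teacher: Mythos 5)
Your core derivation of \eqref{flot} is the paper's own: the paper differentiates the characteristic curve $t\mapsto(\phi_t(z),H(t,\phi_t(z)))$ of \eqref{pde}, obtains the radial L\"owner ODE $\partial_t\phi_t=\phi_tH(t,\phi_t)$ together with an ODE for $H(t,\phi_t)$, and integrates $H\partial_tH$ in $t$; your observation that the forcing is $\tfrac{z}{2}\partial_zH(\infty,\cdot)^2$, so that $G:=H^2-H(\infty,\cdot)^2$ satisfies the transport equation $\partial_tG+zH\partial_zG=0$, is the same computation packaged as a conservation law (and it is consistent with Remark \ref{constant}, since $K(t,z)^2=G(t,z)+1-\max\{\alpha^2,\beta^2\}$ by \eqref{defK} and \eqref{stationary}). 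Up to that point the proposal is correct and essentially identical to the paper.

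The genuine misstep is your final paragraph on conformality. There is no ``contracting orientation'' to pass to: the family in \eqref{flot} \emph{is} the expanding characteristic flow, and the Schwarz-lemma tension you raise is resolved by the domain of definition, not by reversing time. Since $\Re H>0$, trajectories may (and in general do) exit $\mathbb{D}$ in finite time $T_z$; one then restricts to $\Omega_t:=\{z:T_z>t\}$, and the standard radial L\"owner theory (Lawler, Theorem 4.14, as cited in the paper) gives that $\phi_t$ is a conformal map from $\Omega_t$ \emph{onto} $\mathbb{D}$, fixing $0$ with $\phi_t'(0)=e^t$ --- this is the sense of ``conformal self-maps'' in the statement; the genuine self-maps of $\mathbb{D}$ are the inverses $\eta_t:\mathbb{D}\to\Omega_t$, and reading constancy of $G$ along characteristics through $\eta_t$ yields $K(t,z)=K(0,\eta_t(z))$, i.e.\ Proposition \ref{subor}, not \eqref{flot} verbatim. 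Consequently your proposed ``hard points'' are misplaced: global-in-time confinement of orbits in $\mathbb{D}$ is neither needed nor true, and the fact that $\overline{\Omega_t}$ stays at positive distance from $\pm1$ is not part of this proposition at all --- it is Lemma \ref{distance}, it requires extra hypotheses ($b>0$, or $b=0$ and $\nu_0\{0\}>0$, etc.), and it is used only later for the Hardy-class regularity arguments. If you replace your last paragraph by the citation of radial L\"owner theory for the Herglotz vector field $zH(t,z)$ (uniqueness coming from ODE uniqueness with $\phi_0=\mathrm{id}$, as you say), your argument coincides with the paper's proof.
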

\begin{proof}
Differentiating the  characteristic curve $t\mapsto (\phi_t(z),H(t,\phi_t(z)))$ associated with the PDE (\ref{pde}), we get the following system of ODEs:
\begin{equation}\label{Low1}
\partial_t\phi_{t} = \phi_{t} H(t,\phi_{t}), \quad \phi_{0}(z) = z,
\end{equation}
\begin{equation}\label{Low2}
\partial_t \left[H(t,\phi_{t})\right] =\frac{4(\alpha^2+\beta^2)\phi_t^2(1+\phi_t^2)+2\alpha\beta \phi_t(1+6\phi_t^2+\phi_t^4)}{(1-\phi_t^2)^{3}}.
\end{equation}
The ODE \eqref{Low1} is the radial L\"owner equation driven by the Herglotz function $H$. Then $\phi_{t}$ is a conformal map from $\Omega_t:=\{z, T_{z} > t\}$ onto $\mathbb{D}$  (see, e.g., Theorem 4.14 in \cite{Law}), where $T_{z}$ is the supremum of all $t$ such that $\phi_{t}(z) \in \mathbb{D}$ for fixed $z \in \mathbb{D}$.
The ODE \eqref{Low2}, combined with \eqref{Low1}, shows that
\begin{equation}
H\partial_tH=\frac{4(\alpha^2+\beta^2)\phi_t(1+\phi_t^2)+2\alpha\beta (1+6\phi_t^2+\phi_t^4)}{(1-\phi_t^2)^{3}}\partial_t\phi_t.
\end{equation}
Which implies, after integrating with respect to $t$, that
\begin{align*}
H(t,\phi_t(z))^2-H(0,z)^2=&4\frac{(\alpha^2+\beta^2)\phi_t(z)^2+2\alpha\beta \phi_t(z)(1+\phi_t(z)^2)}{(1-\phi_t(z)^2)^{2}}
\\&-4\frac{(\alpha^2+\beta^2)z^2+\alpha\beta z(1+z^2)}{(1-z^2)^{2}}.
\end{align*}
This proves the proposition. 
\end{proof}
\begin{rem}\label{remark}
When $P,Q$ are two projections associated to $R,S$ such that $\tau(P)=\tau(Q)=1/2$ (i.e. $\alpha=\beta=0$), the function $t\mapsto H(t,\phi_t(z))$ is constant, so that $H(t,\phi_t(z))=H(0,z)$. 
Then, $\phi_t(z)=ze^{tH(0,z)}$. This enables us to retrieve the description of $\nu_{t/2}$ in \cite[Proposition 3.3]{Izu-Ued}.
In particular, when $P=Q$ and  $\nu_0=\delta_0$ (i.e. $H(0,z)=(1+z)/(1-z)$), we retrieve the description in \cite[Corollary 3.3]{Dem-Ham-Hmi} of the spectral measure $\mu_t$ on $[0,1]$ of the free Jacobi process (the process $X_t$ viewed as a random variable in the compressed probability space $(P\mathcal{A}P,\frac{1}{\tau(P)}\tau)$). 
\end{rem}

For any $t\geq0$, define\footnote{We take the principal branch of the square root.}
\begin{equation}\label{defK}
K(t,z):=\sqrt{H(t,z)^2-\left(a\frac{1-z}{1+z}+b\frac{1+z}{1-z} \right)^2},\quad |z|<1.
\end{equation}
This function is  analytic in $\mathbb{D}$ with positive real part. Indeed, the function
\begin{equation*}
H(t,z)^2-\left(a\frac{1-z}{1+z}+b\frac{1+z}{1-z} \right)^2,\quad |z|<1
\end{equation*}
can not take negative value in $\D$ since  the two measures $\nu_t-a\delta_{\pi}-b\delta_0$ and $\nu_t+a\delta_{\pi}+b\delta_0$ are finite positive measure in $\mathbb{T}$ (see Proposition \ref{atom} below).
Thus, according to the Herglotz theorem (see \cite[Theorem 1.8.9]{CMR}), there exists a unique probability measure $\gamma_t$ in $\mathbb{T}$ such that 
\begin{align}\label{gamma}
K(t,z)=\int_{\mathbb{T}}\frac{\zeta+z}{\zeta-z}d\gamma_t(\zeta).
\end{align}
\begin{rem}\label{constant}
By \eqref{pde}, the function $K(t,z)$ satisfies 
\begin{equation*}
\partial_tK+zH(t,z)\partial_zK=0
\end{equation*}
 and, in the time stationary case, $K(\infty,z)$ becomes the constant $\sqrt{1-\max\{\alpha^2,\beta^2\}}$ thanks to \eqref{defK} together with \eqref{stationary}.
\end{rem}
 Let $\eta_t$ be the inverse of $\phi_t:\Omega_t\rightarrow\D$. It is known (see, e.g., \cite[Remark 4.15]{Law}) that $\eta_t$ satisfies 
\begin{equation*}
\partial_t\eta_t(z)=-z\partial_z\eta_t(z)H(t,z), \quad \eta_0(z)=z,
\end{equation*}
the radial L\"owner PDE driven by the probability measure $\nu_t$. 
Here is an exact subordination relation.
\begin{pro} \label{subor}
The equality  $K(t,z)=K(0,\eta_t(z))$ holds for any $z\in\mathbb{D}$ and $t\geq0$.
\end{pro}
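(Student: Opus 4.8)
The plan is to read Proposition \ref{subor} as the statement that $K$ is transported by the L\"owner flow, and to prove it from the transport equation for $K$ recorded in Remark \ref{constant}. Recall from that remark that $K$ solves the first-order linear PDE $\partial_tK+zH(t,z)\,\partial_zK=0$ on $\mathbb{D}$. The characteristic curves of this equation are exactly the integral curves of $\dot z=zH(t,z)$, which is the radial L\"owner equation \eqref{Low1} satisfied by $t\mapsto\phi_t(z)$. So I expect $K$ to be constant along each curve $t\mapsto\phi_t(z)$, and the asserted identity to follow by inverting $\phi_t$.

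Concretely, fix $z\in\mathbb{D}$ and let $t<T_z$, so that $\phi_s(z)\in\mathbb{D}$ for every $s\in[0,t]$, i.e.\ $z\in\Omega_t$. Differentiating $s\mapsto K(s,\phi_s(z))$ and using \eqref{Low1} gives
\begin{equation*}
\frac{d}{ds}K(s,\phi_s(z))=(\partial_tK)(s,\phi_s(z))+(\partial_zK)(s,\phi_s(z))\,\partial_s\phi_s(z)=\Big(\partial_tK+zH(s,z)\,\partial_zK\Big)\Big|_{(s,\phi_s(z))}=0,
\end{equation*}
where the last equality is the transport equation of Remark \ref{constant} evaluated at $(s,\phi_s(z))$. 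Hence $K(t,\phi_t(z))=K(0,\phi_0(z))=K(0,z)$ for all $z\in\Omega_t$, since $\phi_0=\mathrm{id}$.

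It remains to re-express this in terms of $\eta_t$. Since $\phi_t\colon\Omega_t\to\mathbb{D}$ is a conformal bijection with inverse $\eta_t$, every $w\in\mathbb{D}$ is of the form $w=\phi_t(z)$ with $z=\eta_t(w)\in\Omega_t$. Substituting $z=\eta_t(w)$ into the identity $K(t,\phi_t(z))=K(0,z)$ yields $K(t,w)=K(0,\eta_t(w))$, which is the claim. Equivalently, one can verify directly that $(t,z)\mapsto K(0,\eta_t(z))$ solves the same transport equation, using the PDE $\partial_t\eta_t=-z\partial_z\eta_t\,H(t,z)$ for $\eta_t$, with initial data $K(0,\eta_0(z))=K(0,z)$, and then conclude by uniqueness of solutions to this linear first-order PDE.

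The genuinely delicate point is not the computation but the bookkeeping of domains and branches. I must ensure that $s\mapsto K(s,\phi_s(z))$ is a well-defined single-valued analytic function along the whole flow; this is exactly where the fact---established just before the statement---that $K(t,\cdot)$ has positive real part on $\mathbb{D}$ is used, since it pins down the principal branch of the square root in \eqref{defK} consistently for all $t$. I also need $\eta_t$ to be defined on all of $\mathbb{D}$ and the substitution $z=\eta_t(w)$ to be legitimate, which is guaranteed by the conformal bijectivity of $\phi_t\colon\Omega_t\to\mathbb{D}$ recorded after \eqref{Low1}. Once these regularity matters are in place, the characteristic argument closes the proof.
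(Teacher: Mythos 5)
Your proof is correct, but it takes a different route from the paper's. The paper does not differentiate anything in the proof of Proposition \ref{subor}: it starts from the already-integrated conservation law \eqref{flot}, $H(t,\phi_t(z))^2-H(\infty,\phi_t(z))^2=H(0,z)^2-H(\infty,z)^2$, and combines it with the algebraic identity
\begin{equation*}
H(\infty,z)^2=1-\max\{\alpha^2,\beta^2\}+\left(a\frac{1-z}{1+z}+b\frac{1+z}{1-z}\right)^2,
\end{equation*}
obtained from the closed formula \eqref{stationary}, to convert the conserved quantity into $K^2$ up to the constant $1-\max\{\alpha^2,\beta^2\}$; this yields $K(t,\phi_t(z))^2=K(0,z)^2$ and hence the claim after inverting $\phi_t$. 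You instead run the method of characteristics directly on $K$, using the transport equation $\partial_tK+zH(t,z)\partial_zK=0$ of Remark \ref{constant} together with the L\"owner ODE \eqref{Low1}. The two arguments rest on the same underlying structure: verifying the remark's transport equation from \eqref{pde} requires exactly the identity displayed above (equivalently, that $z\,B(z)B'(z)$ with $B(z)=a\frac{1-z}{1+z}+b\frac{1+z}{1-z}$ equals the right-hand side of \eqref{pde}), so your proof relocates the key computation into Remark \ref{constant} rather than eliminating it; since that remark precedes the statement, this is legitimate and not circular. What each approach buys: yours is more conceptual ($K$ is exhibited as a first integral of the flow, with no sign ambiguity from taking square roots, since you never pass through $K^2$), at the cost of needing joint $C^1$ regularity of $K$ in $(t,z)$ to justify the chain rule along characteristics; the paper's is purely algebraic given \eqref{flot} and avoids any further differentiation or uniqueness argument. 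One small notational slip in your display: in the evaluation $\bigl(\partial_tK+zH(s,z)\partial_zK\bigr)\big|_{(s,\phi_s(z))}$ the variable $z$ inside should of course be read as $\phi_s(z)$; the intent is clear, but as written the middle expression conflates the spectator point $z$ with the moving point $\phi_s(z)$.
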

\begin{proof}
From \eqref{flot}, we have
\begin{align*}
K(t,\phi_t(z))^2= H(0,z)^2-H(\infty,z)^2+H(\infty,\phi_t(z))^2-\left(a\frac{1-\phi_t(z)}{1+\phi_t(z)}+b\frac{1+\phi_t(z)}{1-\phi_t(z)} \right)^2.
\end{align*}
But
\begin{align*}
H(\infty,z)^2&=1+4 z\frac{\alpha \beta \left(1+z\right)^2+ \left(\alpha-\beta \right)^2z}{\left(1-z^2\right)^2}
\\&=1-\max\{\alpha^2,\beta^2\}+\left(a\frac{1-z}{1+z}+b\frac{1+z}{1-z} \right)^2.
\end{align*}
Then
\begin{align*}
K(t,\phi_t(z))^2= H(0,z)^2-\left(a\frac{1-z}{1+z}+b\frac{1+z}{1-z} \right)^2,
\end{align*}
and we are done.
\end{proof}

The next proposition gives an explicit expression for the subordinate family $(\phi_t)_{t\geq0}$. 

\begin{pro} \label{expression} For any $t\geq0$ and $z\in\Omega_t\cap\mathbb{R}$, we have 
\begin{align*}
\phi_t(z)=\frac{w_t(y)-1}{w_t(y)+1},
\end{align*}
with
\begin{align*}
w_t(y)=\sqrt{\frac{\left(b^2-a^2-c+d e^{t\sqrt{c}}\right)^2-4a^2c}{\left(b^2-a^2+c+d e^{t\sqrt{c}}\right)^2-4b^2c}},\quad y=\frac{1+z}{1-z},
\end{align*}
where $a=\frac{|\alpha- \beta|}{2}, b=\frac{|\alpha+ \beta|}{2}$,
\begin{align*}
c=c(y):= K(0,y)^2+\max\{\alpha^2,\beta^2\}
\end{align*}
and
\begin{align*}
d=d(y):=-c-\alpha\beta+\frac{2c-2\sqrt{c}\sqrt{c-(c+\alpha\beta)(1-y^2)+b^2(1-y^2)^2}}{1-y^2}.
\end{align*}
\end{pro}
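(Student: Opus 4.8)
The plan is to transport the radial L\"owner equation \eqref{Low1} through the Cayley transform $y=(1+z)/(1-z)$ and then exploit the fact, contained in the proof of Proposition~\ref{subor}, that $K$ is constant along the characteristic flow. Concretely I would set
\[
w=w_t(y):=\frac{1+\phi_t(z)}{1-\phi_t(z)},\qquad y=\frac{1+z}{1-z},
\]
so that $\phi_t(z)=(w_t(y)-1)/(w_t(y)+1)$ is exactly the asserted representation and the task reduces to finding $w_t(y)$ in closed form. Since the transform sends $z\in(-1,1)$ to $y>0$ and $\phi_t(z)\in\mathbb{D}\cap\mathbb{R}$ to $w>0$, restricting to $\Omega_t\cap\mathbb{R}$ keeps every quantity real, which is what renders the branch choices unambiguous.

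Differentiating $w=(1+\phi)/(1-\phi)$ and inserting \eqref{Low1} gives $\partial_t w=\tfrac12(w^2-1)\,H(t,\phi_t(z))$. Next I would rewrite the Herglotz factor via \eqref{defK}: because $a\frac{1-\phi}{1+\phi}+b\frac{1+\phi}{1-\phi}=a/w+bw$ at $z=\phi_t(z)$, one has $H(t,\phi_t(z))^2=K(t,\phi_t(z))^2+(a/w+bw)^2$, and Proposition~\ref{subor} in the form $K(t,\phi_t(z))=K(0,z)$ freezes the first term at $K(0,z)^2=c-\max\{\alpha^2,\beta^2\}=c-(a+b)^2$. Putting $u=w^2$ and using $b^2-a^2=\alpha\beta$, $(a+b)^2=\max\{\alpha^2,\beta^2\}$, everything collapses to the autonomous, separable ODE
\[
\frac{du}{dt}=(u-1)\sqrt{P(u)},\qquad P(u)=b^2u^2+(c-a^2-b^2)u+a^2,\qquad u(0)=y^2,
\]
where $c=K(0,z)^2+\max\{\alpha^2,\beta^2\}$ is constant along the characteristic.

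The separation $dt=du/[(u-1)\sqrt{P(u)}]$ is integrated by the substitution $s=1/(u-1)$, which turns $P(u)$ into $Q(s)/s^2$ with $Q(s)=cs^2+(c+b^2-a^2)s+b^2$. The leading coefficient of $Q$ equals $c>0$ (and $P(1)=c$), so the reduced integral is a genuine logarithm,
\[
\int\frac{ds}{\sqrt{Q(s)}}=\frac{1}{\sqrt c}\log\!\Bigl(2cs+(c+b^2-a^2)+2\sqrt c\,\sqrt{Q(s)}\Bigr),
\]
and exponentiating the integrated identity produces the factor $e^{t\sqrt c}$. The constant of integration is pinned down by $u(0)=y^2$, i.e. $s_0=1/(y^2-1)$; rewriting $\sqrt{Q(s_0)}$ back in the variable $y$ (it equals $\sqrt{c-(c+\alpha\beta)(1-y^2)+b^2(1-y^2)^2}/|1-y^2|$) is precisely what yields the stated $d(y)$, with $G_0:=2cs_0+(c+b^2-a^2)+2\sqrt c\,\sqrt{Q(s_0)}=-d(y)$. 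Solving
\[
2cs_t+(c+b^2-a^2)=\frac{G_t}{2}+\frac{(c+b^2-a^2)^2-4cb^2}{2G_t},\qquad G_t=-d(y)\,e^{t\sqrt c},
\]
for $s_t$ and returning to $u_t=1+1/s_t$, $w_t=\sqrt{u_t}$ gives the claimed closed form after routine algebra (one checks $(c+b^2-a^2)^2-4cb^2$ reassembles into the $-4a^2c$ and $-4b^2c$ of the numerator and denominator).

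The step I expect to be the main obstacle is the bookkeeping of signs and square-root branches. The identity $\sqrt{P(u)}=\sqrt{Q(s)}/|s|$ and the factor $\mathrm{sgn}(s)$ arising from $du=-s^{-2}ds$ both change across $s=0$, that is across $y=1$ (equivalently $z=0$); simultaneously the choice of sign of the root in $d(y)$ and the positive branch $w_t=\sqrt{u_t}$ must be reconciled so that $w_0(y)=y$ and $\phi_t$ remains a conformal self-map of $\mathbb{D}$. Confining attention to $z\in\Omega_t\cap\mathbb{R}$ is exactly what lets me fix these signs coherently and verify $w_0(y)=y$ by direct substitution; extension to all of $\mathbb{D}$, which is not claimed in the statement, would afterwards follow by analytic continuation.
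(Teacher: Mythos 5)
Your proposal is correct and follows the same skeleton as the paper's own proof: the Cayley transform $y=(1+z)/(1-z)$, the characteristic system of \eqref{pde} (your $\partial_t w=\tfrac12(w^2-1)H(t,\phi_t(z))$ is exactly \eqref{Low3}), reduction to the same separable ODE --- your $P(u)=b^2u^2+(c-a^2-b^2)u+a^2$ with $u=w^2$, transported by $s=1/(u-1)=-1/(1-w^2)$, is precisely the paper's integrand $du/\bigl(u\sqrt{c-c_1u+c_2u^2}\bigr)$ with $c_1=c+\alpha\beta$, $c_2=b^2$ --- and the same logarithmic primitive producing the factor $e^{t\sqrt c}$. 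You deviate in two places, both gains in economy. First, where the paper integrates the second characteristic equation \eqref{Low4} and then identifies the conserved constant through the explicit computation $1+F(0,y)^2-F(\infty,y)^2=K(0,y)^2+\max\{\alpha^2,\beta^2\}$ using the stationary solution \eqref{stationary}, you invoke Proposition \ref{subor} directly (legitimate, since it is proved before and independently of this proposition) together with $a\frac{1-\phi}{1+\phi}+b\frac{1+\phi}{1-\phi}=a/w+bw$ and $(a+b)^2=\max\{\alpha^2,\beta^2\}$; this makes the origin of $c$ transparent. Second, the paper fixes the integration constant by squaring, solving a quadratic in $\tilde d$ with two roots, and selecting the correct root by consistency with the degenerate case $a=b=0$ of Remark \ref{remark}; you read the constant off the primitive at $t=0$, eliminating that root-selection step. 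The one caveat --- which you correctly anticipate as the main obstacle --- is that your identification $G_0=-d(y)$ is literal only for $0<y<1$ (where $u_0=1-y^2>0$); for $y>1$ one lands on the conjugate combination $\tilde G_0=(c_1^2-4cb^2)/G_0=-d(y)$ and the flow relation becomes $G(s_t)=G(s_0)e^{-t\sqrt c}$, i.e.\ $\tilde G(s_t)=\tilde G(s_0)e^{t\sqrt c}$. Since your inversion identity $2cs_t+c_1=\frac{G_t}{2}+\frac{c_1^2-4cb^2}{2G_t}$ is invariant under the branch swap $G_t\mapsto(c_1^2-4cb^2)/G_t$, the final closed form is uniform across $y=1$ and your sign bookkeeping does close; the paper is no more careful on this point, as it carries an unspecified sign $\epsilon$ and squares it away.
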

\begin{proof} 
In order to make easier computations, we use the M\"obius transform
\begin{align*}
z\mapsto y=\frac{1+z}{1-z} 
\end{align*}
to introduce the function $F(t,y):=H(t,z)$. Since $\frac{dy}{dz}=\frac{-2z}{(1-z)^2}$, the PDE (\ref{pde}) becomes
\begin{align}\label{pde1}
\partial_tF+\frac{y^2-1}{4}\partial_yF^2=\frac{(y^2-1)}{8y^3}\left((\alpha+\beta)^2y^4-(\alpha-\beta)^2\right).
\end{align}
 As usual, the  characteristic curve $t\mapsto (w_t(z),F(t,w_t(z)))$ associated with the PDE (\ref{pde1}) satisfies the system of ODEs:
\begin{equation}\label{Low3}
\partial_tw_{t} =\frac{1}{2}( w_{t}^2-1) F(t,w_{t}), \quad w_{0}(y) = y,
\end{equation}
\begin{equation}\label{Low4}
\partial_t \left[F(t,w_{t})\right] =\frac{(w_t^2-1)}{8w_t^3}\left((\alpha+\beta)^2w_t^4-(\alpha-\beta)^2\right),
\end{equation}
with 
\begin{align*}
w_t(y):=\frac{1+\phi_t(z)}{1-\phi_t(z)}.
\end{align*}
Combining the two last ODE's, we get
\begin{align*}
F\partial_tF=\frac{(\alpha+\beta)^2w_t^4-(\alpha-\beta)^2}{4w_t^3}.
\end{align*}
Hence, integrating with respect to $t$, we get
\begin{align*}
F(t,w_t(y))^2&=F(0,y^2)+\frac{(\alpha+\beta)^2w_t^4(y)+(\alpha-\beta)^2}{4w_t^2(y)}-\frac{(\alpha+\beta)^2y^4+(\alpha-\beta)^2}{4y^2}
\\&=1+ F^2(0,y)-F^2(\infty,y)-\frac{\alpha^2+\beta^2}{2}+\frac{(\alpha+\beta)^2w_t^4(y)+(\alpha-\beta)^2}{4w_t^2(y)}.
\end{align*}
So that, the ODE (\ref{Low3}) becomes
\begin{align*}
\partial_tw_{t}(y) =\frac{w_{t}^2(y)-1}{2} \sqrt{1+F^2(0,y)-F^2(\infty,y)-\frac{\alpha^2+\beta^2}{2}+\frac{(\alpha+\beta)^2w_t^4(y)+(\alpha-\beta)^2}{4w_t^2(y)}}.
\end{align*}
Or, equivalently
\begin{align*}
\partial_tw_{t}(y) =\frac{w_{t}^2(y)-1}{2w_{t}(y)} \sqrt{b^2w_t^4(y)+\left[1+F^2(0,y)-F^2(\infty,y)-a^2-b^2\right]w_{t}(y)^2+a^2}
\end{align*}
where $a=\frac{|\alpha- \beta|}{2}, b=\frac{|\alpha+ \beta|}{2}$.
In order to solve this last ODE, we are lead to compute the indefinite integral 
\begin{align*}
-2\int\frac{xdx}{(1-x^2)\sqrt{b^2x^4+\left(1+F(0,y)^2-F(\infty,y)^2-a^2-b^2\right)x^2+a^2}}
\end{align*}
for $y>0$.
Performing the variable change $u=1-x^2$, we transform this integral to
\begin{align*}
\int\frac{du}{u\sqrt{c-c_1u+c_2u^2}}
\end{align*}
with
\begin{align*}
c&=1+F(0,y)^2-F(\infty,y)^2,
\\c_1&=c+b^2-a^2=c+\alpha\beta,
\\c_2&=b^2=\frac{(\alpha+\beta)^2}{4}.
\end{align*}
Then writing
\begin{align*}
c&=F(0,y)^2-\frac{(\alpha+\beta)^2y^4+(\alpha-\beta)^2}{4y^2}+\frac{\alpha^2+\beta^2}{2}
\\&=F(0,y)^2-\left(\frac{|\alpha+\beta|y^2+|\alpha-\beta|}{2y}\right)^2+\frac{\alpha^2+\beta^2+|\alpha^2-\beta^2|}{2}
\\&=F(0,y)^2-\left(\frac{by^2+a}{y}\right)^2+\max\{\alpha^2,\beta^2\}
\\&=K(0,y)^2+\max\{\alpha^2,\beta^2\},
\end{align*}
we get
\begin{align*}
c_1^2-4cc_2=c^2+2c\alpha\beta+(\alpha\beta)^2-c(\alpha+\beta)^2=(c-\alpha^2)(c-\beta^2).
\end{align*}
Hence (see the proof in \cite[Theorem 3]{Dem-Hmi}), we have
\begin{align*}
\int\frac{du}{u\sqrt{c-c_1u+c_2u^2}}=\frac{1}{\sqrt{c}}\ln\frac{2c-c_1u-2\sqrt{c}\sqrt{c-c_1u+c_2u^2}}{|u|}.
\end{align*}
Let $u_t(y):=1-w_t^2(y)$, then
\begin{align*}
\frac{2c-c_1u_t(y)-2\sqrt{c}\sqrt{c-c_1u_t(y)+c_2u_t(y)^2}}{|u_t(y)|}=de^{t\sqrt{c}}
\end{align*}
for some $d=d(y,\alpha,\beta)$ and hence
\begin{align*}
2c-(c_1+\epsilon de^{t\sqrt{c}})u_t(y)=2\sqrt{c}\sqrt{c-c_1u_t(y)+c_2u_t(y)^2}
\end{align*}
where $\epsilon$ is the sign of $u$.
Raising this equality to the square and rearranging it , we get
\begin{align}\label{equality}
\left[(c_1+\epsilon de^{t\sqrt{c}})^2-4cc_2\right]u_t(y)=4c\epsilon de^{t\sqrt{c}}.
\end{align}
Equivalently,
\begin{align*}
u_t(y)=\frac{4c\tilde{d}e^{t\sqrt{c}}}{(c_1+\tilde{d}e^{t\sqrt{c}})^2-4cc_2}
\end{align*}
with $\tilde{d}=\epsilon d$. Hence
\begin{align}
w_t(y)^2&=\frac{(c_1+\tilde{d}e^{t\sqrt{c}})^2-4cc_2-4c\tilde{d}e^{t\sqrt{c}}}{(c_1+\tilde{d}e^{t\sqrt{c}})^2-4cc_2} \nonumber
\\&=\frac{(b^2-a^2+c+\tilde{d}e^{t\sqrt{c}})^2-4cb^2-4c\tilde{d}e^{t\sqrt{c}}}{(b^2-a^2+c+\tilde{d}e^{t\sqrt{c}})^2-4cb^2} \nonumber
\\&=\frac{(b^2-a^2-c+\tilde{d}e^{t\sqrt{c}})^2-4ca^2}{(b^2-a^2+c+\tilde{d}e^{t\sqrt{c}})^2-4cb^2}.\label{subordinate}
\end{align}
Finally, in order to find the value of $\tilde{d}$, we check the equality \eqref{equality} for $t=0$
\begin{align*}
\left[(c_1+\tilde{d})^2-4cc_2\right]u_0=4c\tilde{d}
\end{align*}
where $u_0:=u_0(y)=1-w_0(y)^2=1-y^2$.
  Then
  \begin{align*}
 {\tilde{d} }^2+2(c_1-\frac{2c}{u_0})\tilde{d}+c_1^2-4cc_2=0.
  \end{align*}
  The discriminant of this quadratic is
  \begin{align*}
\Delta'&=(c_1-\frac{2c}{u_0})^2-c_1^2+4cc_2
\\&=\frac{4c^2}{u_0^2}-\frac{4cc_1}{u_0}+4cc_2
\\&=\frac{4c}{u_0^2}\left(c-c_1u_0+c_2u_0^2\right)
  \end{align*}
  and hence
  \begin{align*}
 {\tilde{d} }=-c_1+\frac{2c}{u_0}\pm \frac{2\sqrt{c}}{u_0}\sqrt{c-c_1u_0+c_2u_0^2}.
  \end{align*}
  When $a=b=0$ (i.e. $c_1=c=F(0,y)^2$ and $c_2=0$), it becomes
  \begin{align*}
 {\tilde{d} }=-c+\frac{2c\pm 2cy}{1-y^2}=\frac{1\pm 2y+y^2}{1-y^2}c.
  \end{align*}
  Therefore the only solution is
  \begin{align*}
 {\tilde{d} }=-c_1+\frac{2c}{u_0}- \frac{2\sqrt{c}}{u_0}\sqrt{c-c_1u_0+c_2u_0^2}
  \end{align*}
  since for $a=b=0$, we have on the one hand by \eqref{subordinate}
  \begin{align*}
 w_t(y)=\sqrt{\frac{(-c+\tilde{d}e^{t\sqrt{c}})^2}{(c+\tilde{d}e^{t\sqrt{c}})^2}}=\frac{1-\frac{\tilde{d}}{c}e^{t\sqrt{c}}}{1+\frac{\tilde{d}}{c}e^{t\sqrt{c}}}
  \end{align*}
  on the other hand (see Remark \ref{remark}),
  \begin{align*}
w_t(y)=\frac{1+\phi_t(z)}{1-\phi_t(z)}=\frac{1+ze^{tH(0,z)}}{1-ze^{tH(0,z)}}=\frac{1+\frac{y-1}{y+1}e^{tF(0,y)}}{1-\frac{y-1}{y+1}e^{tF(0,y)}}.
\end{align*}
Hence we are done.
\end{proof}

Note that $\eta_t:\mathbb{D}\rightarrow\Omega_t$ satisfies $\eta_t(0)=0$ and 
$|\eta_t(z)|<1$ for any $z\in\mathbb{D}$. Then the characterization of the $\eta$-transform of measures on $\mathbb{T}$ in  \cite[Proposition 3.2]{Bel-Ber} implies that for any $t>0$, there exists a unique probability measure $\rho_t$ on $\mathbb{T}$ such that $\eta_t(z)=\eta_{\rho_t}(z)$ and $\phi_{t}\left(\eta_{\rho_t}(z)\right)=z$ hold for all $z\in\mathbb{D}$. 
The function $\phi_t$ satisfies the properties in \cite[Theorem 4.4, Proposition 4.5]{Bel-Ber}. Thus we have
\begin{pro}\cite[Theorem 4.4, Proposition 4.5]{Bel-Ber}\label{extension}
\begin{enumerate}
\item $\eta_t$ extends continuously to $\partial\mathbb{D}$.
\item if $\zeta\in\mathbb{T}$ satisfies $\eta_t(\zeta)\in \mathbb{D}$,  $\eta_t$ can be continued analytically to a neighborhood of $\zeta$.
\item $\Omega_{t}$ is a simply connected domain bounded by a simple closed curve.
\end{enumerate}
\end{pro}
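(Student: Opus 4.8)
The plan is to deduce all three assertions from the general boundary theory of $\eta$-transforms in \cite{Bel-Ber}, after checking that our explicit subordination data meets the hypotheses of \cite[Theorem 4.4, Proposition 4.5]{Bel-Ber}. We have already identified $\eta_t$ with the $\eta$-transform $\eta_{\rho_t}$ of a probability measure $\rho_t$ on $\mathbb{T}$, and $\phi_t=\eta_t^{-1}$ with the corresponding conformal map of $\Omega_t=\eta_t(\mathbb{D})$ onto $\mathbb{D}$. First I would isolate the single regularity input that those theorems require: that the boundary $\partial\Omega_t$ of the range stays at a strictly positive distance from the two exceptional points $z=\pm1$. These points are precisely where $H(\infty,\cdot)$ has its singularities and where $\nu_\infty$ carries its atoms $a\delta_\pi$ and $b\delta_0$; they are the only places at which the radial L\"owner flow $\partial_t\phi_t=\phi_tH(t,\phi_t)$ driving $\phi_t$ can degenerate.

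Establishing this positive-distance property is the step I expect to be the main obstacle, and it is where the explicit formula of Proposition \ref{expression} does the real work. Passing to the half-plane variable $y=(1+z)/(1-z)$ and to $w_t(y)=(1+\phi_t(z))/(1-\phi_t(z))$, the points $z=\pm1$ become $y=\infty$ and $y=0$, and the closed form for $w_t(y)$ in Proposition \ref{expression}, together with $c(y)=K(0,y)^2+\max\{\alpha^2,\beta^2\}$ and the constancy $K(\infty,\cdot)\equiv\sqrt{1-\max\{\alpha^2,\beta^2\}}$ from Remark \ref{constant}, gives exact control on the boundary trajectories $\theta\mapsto w_t(e^{i\theta})$. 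Since $z=\pm1$ are poles of $H(\infty,\cdot)$ and, along the flow, of $H(t,\cdot)$, the transport $\partial_t\phi_t=\phi_tH(t,\phi_t)$ carries boundary points away from them; the technical core is to extract from the explicit expression a uniform lower bound for $\mathrm{dist}(\partial\Omega_t,\{\pm1\})$ on each compact time interval. The constancy of $K(\infty,\cdot)$, i.e.\ the positivity $1-\max\{\alpha^2,\beta^2\}>0$ for $\alpha,\beta$ in the open interval $(-1,1)$, is what prevents the estimate from collapsing.

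With the positive-distance property in hand, assertions (1)--(3) follow directly from the cited results. The continuous extension of $\eta_t$ to $\partial\mathbb{D}$ in (1) is the boundary-regularity statement of \cite[Theorem 4.4]{Bel-Ber}. For (2), at a point $\zeta\in\mathbb{T}$ with $\eta_t(\zeta)\in\mathbb{D}$ the image lies in the open disc, hence strictly inside $\Omega_t$ and away from the degenerate boundary, so $\phi_t$ is analytic with non-vanishing derivative near $\eta_t(\zeta)$ and $\eta_t$ continues analytically across $\zeta$ by inversion, exactly as in \cite[Proposition 4.5]{Bel-Ber}. Finally, (3) is the Carath\'eodory consequence: the continuous injective extension of the conformal map $\eta_t$ realizes $\overline{\mathbb{D}}$ as a homeomorph of $\overline{\Omega_t}$, so $\Omega_t$ is a simply connected domain bounded by the simple closed curve $\theta\mapsto\eta_t(e^{i\theta})$. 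The only genuinely new computation is the boundary distance estimate; the remainder is a verbatim application of \cite[Theorem 4.4, Proposition 4.5]{Bel-Ber} to the verified hypothesis.
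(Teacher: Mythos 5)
Your proposal rests on a premise that is not in the cited results: \cite[Theorem 4.4, Proposition 4.5]{Bel-Ber} do not require $\partial\Omega_t$ to stay at positive distance from $\pm1$. The paper offers no proof of Proposition \ref{extension} at all; it is a direct citation, and the only hypotheses to verify are those checked in the paragraph immediately preceding the statement: $\eta_t(0)=0$ and $|\eta_t(z)|<1$ on $\mathbb{D}$, so that by \cite[Proposition 3.2]{Bel-Ber} one has $\eta_t=\eta_{\rho_t}$ for a unique probability measure $\rho_t$ on $\mathbb{T}$, with $\phi_t\circ\eta_t=\mathrm{id}$ and $\phi_t$ the radial L\"owner map driven by the Herglotz family $H(t,\cdot)$ --- exactly the setting of Belinschi--Bercovici's boundary results, which are proved by Julia--Carath\'eodory-type arguments, not under any distance-to-singularity hypothesis. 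Your ``single regularity input'' cannot in fact be an input, because Proposition \ref{extension} holds unconditionally for all $t>0$ and all $\nu_0$, whereas the positive-distance property is genuinely false in general: take $\alpha=\beta=0$ and $\nu_0=\frac{1}{2}(\delta_i+\delta_{-i})$, so that $H(0,z)=(1-z^2)/(1+z^2)$ vanishes at $\pm1$ and, by Remark \ref{remark}, $\phi_t(z)=ze^{tH(0,z)}$ fixes $\pm1$; for small $t$ one checks that $\pm1\in\partial\Omega_t$, yet all three assertions of the proposition still hold. This is precisely why the paper's Lemma \ref{distance} carries extra hypotheses ($b>0$, or $b=0$ and $\nu_0\{0\}>0$, and symmetrically at $\pi$) that the proposition does not.

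Worse, your route is circular relative to the paper's architecture: the positive-distance statement is Lemma \ref{distance}, which comes \emph{after} Proposition \ref{extension} and uses its third item (that $\Omega_t$ is a Jordan domain) to produce the boundary crossing points $x(t)_\pm$ with $\phi_t(x(t)_\pm)=\pm1$ and to conclude $[x(t)_-,x(t)_+]\subset(-1,1)$. So the estimate you defer as ``the technical core'' is, in the paper's logic, a consequence of the very proposition you are proving, and your proposal never actually carries it out (you only assert that the explicit formula of Proposition \ref{expression} ``gives exact control''). Your reductions of items (1)--(3) to \cite{Bel-Ber} plus Carath\'eodory are unobjectionable as far as they go, but the added hypothesis is both unnecessary and, at this stage of the paper, unobtainable; the correct argument is simply to verify that the pair $(\phi_t,\eta_t)$ fits the framework of \cite{Bel-Ber} via \cite[Proposition 3.2]{Bel-Ber} and then quote the two cited results.
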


\begin{lem}\label{distance}
The region $\overline{\Omega_t}$ does not contain 1 (resp. -1) whenever $b>0$ or $b=0$ and $\nu_0\{0\}>0$ (resp. $a>0$ or $a=0$ and $\nu_0\{\pi\}>0$).
\end{lem}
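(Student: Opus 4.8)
The plan is to translate the geometric statement ``$1\in\overline{\Omega_t}$'' into a statement about the real-axis dynamics, and then to show that under the hypotheses points approaching $1$ leave $\Omega_t$ in arbitrarily short time. Since the moments $f_n$ are real, $\nu_t$ is invariant under conjugation, so the driving Herglotz function $H(t,\cdot)$, the flow $\phi_t$ and its inverse $\eta_t$ all commute with $z\mapsto\bar z$; hence $\Omega_t$ is symmetric about $\mathbb{R}$ and $\overline{\Omega_t}\cap\mathbb{R}$ is a closed interval whose right endpoint is $\eta_t(1)$ (using the continuous boundary extension and Jordan-curve structure of Proposition \ref{extension}). Consequently $1\in\overline{\Omega_t}$ iff $\eta_t(1)=1$, and since $T_z$ is the escape time of the real trajectory $s\mapsto\phi_s(z)$ through $+1$, it suffices to prove that $T_z\to 0$ as $z\uparrow 1$; then for any fixed $t>0$ an entire interval $(1-\varepsilon,1)$ lies outside $\Omega_t$, forcing $\eta_t(1)<1$. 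The case of $-1$ reduces to this one upon replacing $R$ by $-R$, which sends $\alpha\mapsto-\alpha$, reflects $\nu_t$ by $\zeta\mapsto-\zeta$, and thereby interchanges $a\leftrightarrow b$ and $\nu_0\{0\}\leftrightarrow\nu_0\{\pi\}$.

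For the estimate on $T_z$ I would first dispatch the case $b>0$ by a comparison argument on the radial L\"owner equation \eqref{Low1}. Splitting $\nu_s=\nu_s\{0\}\,\delta_0+(\nu_s-\nu_s\{0\}\delta_0)$ and using $\nu_s\{0\}\ge b$ (Proposition \ref{atom}), one gets for real $x\in(0,1)$ the lower bound $H(s,x)\ge b\,\tfrac{1+x}{1-x}$, since the Herglotz transform of the remaining positive, conjugation-symmetric measure has nonnegative value there. Hence the real trajectory $x(s)=\phi_s(z)$ dominates the solution of $\dot u=b\,u\tfrac{1+u}{1-u}$, $u(0)=z$, whose escape time $\int_{z}^{1}\tfrac{1-u}{b\,u(1+u)}\,du$ is finite and tends to $0$ as $z\uparrow1$; by comparison $T_z$ does too, so $1\notin\overline{\Omega_t}$ for every $t>0$. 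The same bound with the atom at $\pi$ settles $-1$ when $a>0$.

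The borderline case $b=0$, $\nu_0\{0\}>0$ is where this breaks down, since then $\nu_\infty\{0\}=0$ gives no uniform lower bound for $\nu_s\{0\}$ on $[0,t]$; here I would appeal to the explicit subordinate of Proposition \ref{expression}. When $b=0$ one has $\beta=-\alpha$, $\max\{\alpha^2,\beta^2\}=a^2$, $c(y)=K(0,y)^2+a^2$, and the denominator in \eqref{subordinate} is the perfect square $\big(c(y)-a^2+\tilde d(y)e^{t\sqrt{c(y)}}\big)^2$, so the real trajectory escapes ($w_t(y)=+\infty$) exactly when $\tilde d(y)e^{t\sqrt{c(y)}}=-K(0,y)^2$, giving the escape time $T(y)=\tfrac{1}{\sqrt{c(y)}}\log\!\big(-K(0,y)^2/\tilde d(y)\big)$. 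As $z\uparrow1$, i.e. $y=\tfrac{1+z}{1-z}\to\infty$, the atom $\gamma_0\{1\}=\sqrt{\nu_0\{0\}^2-b^2}=\nu_0\{0\}>0$ forces $K(0,y)^2\sim\nu_0\{0\}^2y^2$, whence $c(y)\sim\nu_0\{0\}^2y^2$ and, after extracting the leading term of $\tilde d(y)$, $\tilde d(y)\sim-\nu_0\{0\}^2y^2$; therefore $-K(0,y)^2/\tilde d(y)\to1$ while $\sqrt{c(y)}\to\infty$, so $T(y)\to0$, and again $1\notin\overline{\Omega_t}$.

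The main obstacle is precisely this borderline regime: when the steady-state atom vanishes, the naive flow estimate fails because atoms are only upper semicontinuous under weak convergence and could in principle drop discontinuously just past $s=0$, so one cannot lower-bound $\nu_s\{0\}$ on a small time interval without extra work. The explicit formula circumvents this, and the one genuinely delicate computation there is the asymptotics of $\tilde d(y)=-c_1+\tfrac{2c}{u_0}-\tfrac{2\sqrt c}{u_0}\sqrt{c-c_1u_0+c_2u_0^2}$ as $y\to\infty$: this is a difference of large, nearly cancelling terms (with $u_0=1-y^2$), and one must track it to leading order to conclude that the escape time tends to $0$ rather than to a positive constant. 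A secondary point to make rigorous is the boundary identification $1\in\overline{\Omega_t}\iff\eta_t(1)=1$, which rests on the conjugation symmetry together with the continuous extension from Proposition \ref{extension}.
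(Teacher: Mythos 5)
Your proof is correct, and it differs from the paper's in a genuinely useful way. The paper argues entirely from the closed form of Proposition \ref{expression}: using $\nu_0\{0\}\geq b$ it asserts $\lim_{y\to+\infty}c(y)=+\infty$ and $d(y)/c(y)\to-1$, concludes $w_t(y)\to1$, i.e.\ $\lim_{z\to1^-}\phi_t(z)=0$ (and symmetrically at $-1$), and then closes with exactly your geometric step: $\phi_t(0)=0$ and $\Omega_t$ a Jordan domain meeting the real axis at $x(t)_\pm$ with $\phi_t(x(t)_\pm)=\pm1$, whence $[x(t)_-,x(t)_+]\subset(-1,1)$. Your treatment of the borderline case $b=0$, $\nu_0\{0\}>0$ is essentially the paper's computation repackaged as an escape-time estimate $T(y)=c^{-1/2}\log\bigl(-K(0,y)^2/\tilde d\bigr)\to0$, resting on the same asymptotics of $c$ and $\tilde d$. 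Where you genuinely depart is the case $b>0$: the comparison of the radial L\"owner trajectory with $\dot u=b\,u(1+u)/(1-u)$, justified by the persistence $\nu_s\{0\}\geq b$ for all $s\geq0$ from Proposition \ref{atom}, is softer than the explicit formula and, importantly, more robust. Indeed, the paper's step $\lim_{y\to+\infty}c(y)=+\infty$ is only clear when $\sigma_0\{0\}=\nu_0\{0\}-b>0$; in the borderline situation $\nu_0\{0\}=b>0$ — realized, e.g., by freely independent $P,Q$ with $\tau(P)+\tau(Q)\neq1$, where $K(0,\cdot)^2\equiv1-\max\{\alpha^2,\beta^2\}$ by Remark \ref{constant}, so $c\equiv1$ stays bounded — the paper's asymptotic argument does not apply as written, while your comparison argument covers this subcase without modification, so it actually repairs a weak point rather than merely replacing it. Your diagnosis of why the flow estimate cannot reach $b=0$ is also on target; in fact Proposition \ref{atom} gives $\nu_t\{0\}=b=0$ for every $t>0$ there, so the atom is certain to vanish instantly, not merely permitted to. Two details worth writing out when you finalize: the identification $\overline{\Omega_t}\cap\mathbb{R}=[x(t)_-,x(t)_+]$ (conjugation invariance of $\nu_t$ makes $\partial\Omega_t$ a reflection-invariant Jordan curve, and an orientation-reversing involution of such a curve has exactly two fixed points, which are $\eta_t(\pm1)$), and, in the $b=0$ case, that $\tilde d(y)<0$ for $y$ large (immediate from $\tilde d/c\to-1$), so the blow-up time of the closed-form solution is well defined and coincides with the escape time $T_z$.
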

\begin{proof}

Since  $\nu_0\{0\}\geq b$ (see Proposition \ref{atom} below), by the assumption $b>0$ or $b=0$ and $\nu_0\{0\}>0$ we deduce that
\begin{align*}
\lim_{y\rightarrow+\infty}c(y)=\lim_{y\rightarrow+\infty}\left[F(0,y)-by-\frac{a}{y}\right]\left[F(0,y)+by+\frac{a}{y}\right]+\max\{\alpha^2,\beta^2\}=+\infty.
\end{align*}
Moreover, from the equality (see Proposition \ref{expression})
\begin{align*}
d(y)=c(y)\left[-1-\frac{\alpha\beta}{c(y)}+\frac{2}{1-y^2}+2\sqrt{\frac{1}{(y^2-1)^2}+\frac{c(y)+\alpha\beta}{c(y)(y^2-1)}+\frac{b^2}{c(y)}}\right],
\end{align*}
we see that, 
\begin{align*}
\lim_{y\rightarrow+\infty}d(y)=-\infty\ \ {\rm and} \quad\lim_{y\rightarrow+\infty}\frac{d(y)}{c(y)}=-1.
\end{align*}
As a result, 
\begin{align*}
w_t(y)=\sqrt{\frac{\left(b^2-a^2-c(y)+d(y) e^{t\sqrt{c(y)}}\right)^2-4a^2c(y)}{\left(b^2-a^2+c(y)+d(y) e^{t\sqrt{c(y)}}\right)^2-4b^2c(y)}}
\end{align*}
converges to 1 when $y$ goes to $+\infty$. 
Equivalently, in the $z$-variable we have, $\lim_{z\rightarrow1^-}\phi_t(z)=0$ (see Proposition \ref{expression}).
Proceeding in the same way, we prove that $\lim_{z\rightarrow-1^+}\phi_t(z)=0$. Note that, in this case, $\nu_0\{\pi\}\geq a$ and the assumption $a>0$ or $a=0$ and $\nu_0\{\pi\}>0$ implies that $\lim_{y\rightarrow 0}c(y)=+\infty$ and $\lim_{y\rightarrow 0}d(y)/c(y)=1$.
Since $\phi_t(0)=0$ and $\Omega_{t}$ is a simply connected domain bounded by a simple closed curve, we see that $\partial\Omega_{t}$ intersect $x-$axis at two points $x(t)_\pm$ from either side of the origin, with $\phi_t(x(t)_\pm)=\pm1$. From  $\lim_{z\rightarrow\pm1^\mp}\phi_t(z)=0$, we deduce that $[x(t)_-,x(t)_+]\subset(-1,1)$.
\end{proof}
\begin{cor}\label{assumption} 
For any $t>0$,
\begin{align*}
z\mapsto a\frac{1-\eta_t(z)}{1+\eta_t(z)}+b\frac{1+\eta_t(z)}{1-\eta_t(z)}
\end{align*}
is a function of Hardy class $H^{\infty}(\mathbb{D})$.
\end{cor}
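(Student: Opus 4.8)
The plan is to recognize the displayed map as a composition $g\circ\eta_t$, where
\[
g(w):=a\,\frac{1-w}{1+w}+b\,\frac{1+w}{1-w},
\]
and to reduce everything to controlling how close $\eta_t$ comes to the two points $w=\pm1$, which are the only singularities of $g$. First I would note that $g$ is analytic on $\mathbb{D}$ with a simple pole at $w=-1$ weighted by $a$ and a simple pole at $w=1$ weighted by $b$, and no other singularity on $\overline{\mathbb{D}}$. Because $\eta_t$ maps $\mathbb{D}$ conformally onto $\Omega_t\subseteq\mathbb{D}$, one has $|\eta_t(z)|<1$, hence $\eta_t(z)\neq\pm1$, for every $z\in\mathbb{D}$; consequently $g\circ\eta_t$ is automatically analytic on $\mathbb{D}$, and the only issue is its boundedness.

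Next I would handle the two summands independently. For the term weighted by $a$: if $a=0$ it vanishes identically, while if $a>0$ Lemma \ref{distance} gives $-1\notin\overline{\Omega_t}$. Symmetrically, the term weighted by $b$ either vanishes ($b=0$) or, when $b>0$, Lemma \ref{distance} yields $1\notin\overline{\Omega_t}$. So in every case the singularity of $g$ relevant to a nonvanishing summand lies outside the closed region $\overline{\Omega_t}$.

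To convert this into a uniform bound I would invoke Proposition \ref{extension}(1), which provides a continuous extension of $\eta_t$ to $\overline{\mathbb{D}}$. By continuity and compactness, $\eta_t(\overline{\mathbb{D}})$ is a compact subset of $\overline{\Omega_t}$. Hence, whenever $a>0$, the point $-1$ sits at a strictly positive distance $\delta_-$ from $\eta_t(\overline{\mathbb{D}})$, so $|1+\eta_t(z)|\geq\delta_-$ throughout $\mathbb{D}$ and therefore $\bigl|a\tfrac{1-\eta_t(z)}{1+\eta_t(z)}\bigr|\leq 2a/\delta_-$; likewise, whenever $b>0$, the point $1$ stays at positive distance $\delta_+$ from $\eta_t(\overline{\mathbb{D}})$, giving $\bigl|b\tfrac{1+\eta_t(z)}{1-\eta_t(z)}\bigr|\leq 2b/\delta_+$. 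Adding the two estimates shows the function is bounded and analytic, i.e. of class $H^{\infty}(\mathbb{D})$.

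The main obstacle is precisely the step from the qualitative statement $\pm1\notin\overline{\Omega_t}$ to the quantitative separation $|1\mp\eta_t(z)|\geq\delta_{\mp}$; this is where the continuous boundary extension of $\eta_t$ from Proposition \ref{extension} is indispensable, since without it $\eta_t(\overline{\mathbb{D}})$ need not be closed and could a priori accumulate at $\pm1$. Once Lemma \ref{distance} and Proposition \ref{extension} are available, all remaining steps are elementary.
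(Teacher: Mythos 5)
Your proof is correct and follows essentially the same route as the paper: both rest on Lemma \ref{distance} (so that the pole at $-1$, resp.\ $1$, lies off $\overline{\Omega_t}$ whenever the corresponding weight $a$, resp.\ $b$, is nonzero) combined with the continuous boundary extension of $\eta_t$ from Proposition \ref{extension}(1). You merely spell out the compactness step --- $\eta_t(\overline{\mathbb{D}})$ is a compact subset of $\overline{\Omega_t}$, hence at positive distance from the relevant singularity --- which the paper's one-line proof leaves implicit.
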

\begin{proof} 
By the first item of Proposition \ref{extension}, we can easily confirm that $\eta_t$ is of hardy class $H^{\infty}(\D)$ and hence the function
\begin{align*}
z\mapsto a\frac{1-\eta_t(z)}{1+\eta_t(z)}+b\frac{1+\eta_t(z)}{1-\eta_t(z)} 
  \end{align*} 
is of hardy class $H^{\infty}(\D)$ by the previous Lemma, thanks to the fact that $\eta_t$ can not take the values $\pm1$ in $\mathbb{D}$.
\end{proof}



\section{Relationship between $\mu_t$ and $\nu_t$}

Keep the symbols $P,Q,R,S,\alpha,\beta,a,b$ and $\mu_t,\nu_t$ above. In what follows $P,Q$ and $R,S$ are associated.
Our goal here is to derive relationship between $\mu_t$ and $\nu_t$ and give more detailed properties of $\nu_t$. Here is a relationship between the corresponding sequence of moments.

\begin{pro}
For any $n \geq 1$, one has :
\begin{align}\label{Binom}
\tau[(PU_tQU_t^{*})^n]  = \frac{1}{2^{2n+1}}\binom{2n}{n}  + \frac{\tau(R+S)}{4} + \frac{1}{2^{2n}}\sum_{k=1}^n \binom{2n}{n-k}\tau((RU_tSU_t^*)^k).
\end{align}
\end{pro}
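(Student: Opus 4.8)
The plan is to pass to the conjugated objects $\tilde Q_t := U_tQU_t^*$ and $\tilde S_t := U_tSU_t^* = 2\tilde Q_t-\mathbf 1$, so that $X_t = P\tilde Q_t$ and $Y_t = R\tilde S_t$. First I would trade the non-self-adjoint moment for a self-adjoint one: since $P^2=P$ and $\tilde Q_t^2=\tilde Q_t$, traciality gives $\tau[X_t^n]=\tau[(P\tilde Q_t)^n]=\tau[(P\tilde Q_tP)^n]$, so it suffices to analyse the angle operator $W:=P\tilde Q_tP$. The bridge to the circular variable $Y_t$ will be its self-adjoint real part $H:=\tfrac12(Y_t+Y_t^*)=\tfrac12(R\tilde S_t+\tilde S_tR)$.

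The crux is a commutation/compression identity. A direct expansion using $R^2=\tilde S_t^2=\mathbf 1$ shows that $H$ commutes with $R$ (hence with $P$) and with $\tilde S_t$ (hence with $\tilde Q_t$); indeed $RH=\tfrac12(\tilde S_t+R\tilde S_tR)=HR$, and symmetrically for $\tilde S_t$. Writing $R\tilde S_t+\tilde S_tR=4(P\tilde Q_t+\tilde Q_tP)-4P-4\tilde Q_t+2\mathbf 1$ and compressing by $P$ on both sides collapses, after $P^2=P$, to $PHP=2W-P$; combined with $PHP=HP$ this yields the clean operator identity
\begin{equation*}
W=P\tilde Q_tP=\tfrac12(H+\mathbf 1)P.
\end{equation*}
Since $H$ and $P$ commute and $P$ is idempotent, $W^n=\bigl(\tfrac{H+1}{2}\bigr)^nP$, so inserting $P=\tfrac12(\mathbf 1+R)$ gives
\begin{equation*}
\tau[X_t^n]=\tfrac12\,\tau\!\left[\left(\tfrac{H+1}{2}\right)^{n}\right]+\tfrac12\,\tau\!\left[\left(\tfrac{H+1}{2}\right)^{n}R\right].
\end{equation*}

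It then remains to evaluate the two traces by functional calculus and elementary combinatorics. Spectrally $Y_t=e^{i\Theta}$ gives $H=\cos\Theta$ and $\tfrac{H+1}{2}=\cos^2(\Theta/2)$, so the classical power-reduction formula $\cos^{2n}x=2^{-2n}\binom{2n}{n}+2^{1-2n}\sum_{k=1}^n\binom{2n}{n-k}\cos(2kx)$, together with $\cos(k\Theta)=\tfrac12(Y_t^k+Y_t^{-k})$, writes $\bigl(\tfrac{H+1}{2}\bigr)^n$ as $2^{-2n}\binom{2n}{n}\mathbf 1+2^{-2n}\sum_{k=1}^n\binom{2n}{n-k}(Y_t^k+Y_t^{-k})$. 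Taking the trace and using $\tau[Y_t^{-k}]=\tau[Y_t^k]$ (the moments are real, as is clear from Proposition \ref{pro1}) turns the first half into exactly $\tfrac{1}{2^{2n+1}}\binom{2n}{n}+\tfrac{1}{2^{2n}}\sum_{k=1}^n\binom{2n}{n-k}\tau[Y_t^k]$. For the second half I multiply the same expansion by $R$ and invoke the computation already carried out in the proof of Proposition \ref{pro1}, namely $\tau[Y_t^kR]=\alpha$ for $k$ even and $\tau[Y_t^kR]=\beta$ for $k$ odd; the $n$-dependence then cancels once I evaluate the partial binomial sums $\sum_{k\ \mathrm{odd}}\binom{2n}{n-k}=2^{2n-2}$ and $\binom{2n}{n}+2\sum_{k\ \mathrm{even},\,k\ge2}\binom{2n}{n-k}=2^{2n-1}$, yielding $\tfrac12\tau[(\tfrac{H+1}{2})^nR]=\tfrac{\alpha+\beta}{4}=\tfrac{\tau(R+S)}{4}$. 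Adding the two contributions gives the asserted identity.

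I expect the main obstacle to be establishing the operator identity $W=\tfrac12(H+\mathbf 1)P$: the whole proof hinges on the observation that the real part $H$ of the unitary $Y_t$ commutes with both projections, which is precisely what upgrades the a priori awkward compression $PHP$ into honest functional calculus in a single commuting variable. A secondary, purely bookkeeping hurdle is the evaluation of the two partial (alternating and non-alternating) binomial sums, which follows from the finite-difference identity $\sum_{m=0}^{j}(-1)^m\binom{N}{m}=(-1)^j\binom{N-1}{j}$ and the symmetry relation $\sum_{j<n}\binom{2n}{j}=\tfrac12\bigl(2^{2n}-\binom{2n}{n}\bigr)$.
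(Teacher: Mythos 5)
Your proof is correct, and it takes a genuinely different route from the paper. The paper expands $\tau[(PU_tQU_t^*)^n]=2^{-2n}\tau[(({\bf 1}+R)({\bf 1}+\tilde S))^n]$ with $\tilde S=U_tSU_t^*$ and leans on the word-enumeration of \cite[Proposition 4.1]{Dem-Ham-Hmi}: the even-length words reproduce the binomial coefficients, the odd-length words contribute $N(\tau(R)+\tau(S))$ for some undetermined integer $N$, and $N=2^{2n-2}$ is pinned down by the specialization $R=S$. You instead prove the clean operator identity $P\tilde Q_tP=\tfrac12(H+{\bf 1})P$ with $H=\tfrac12(Y_t+Y_t^*)$ commuting with $P$ --- the standard ``cosine operator'' of two-projection theory --- and then the whole moment formula is functional calculus in the single unitary $Y_t$ plus the power-reduction identity (equivalently, the binomial theorem applied to $\tfrac{H+{\bf 1}}{2}=\tfrac{({\bf 1}+Y_t)^2}{4Y_t}$), with the constant $\tfrac{\tau(R+S)}{4}$ emerging from explicit partial binomial sums rather than from a specialization trick. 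All your intermediate claims check out: the compression $\tau[X_t^n]=\tau[(P\tilde Q_tP)^n]$, the commutations $RHR=H$, the computation $PHP=2W-P$, the trace values $\tau[Y_t^kR]=\alpha$ ($k$ even), $\beta$ ($k$ odd), and the sums $\sum_{k\ \mathrm{odd}}\binom{2n}{n-k}=2^{2n-2}$, $\binom{2n}{n}+2\sum_{k\ \mathrm{even},\,k\geq2}\binom{2n}{n-k}=2^{2n-1}$. One small point you should make explicit: your expansion also produces the terms $\tau[Y_t^{-k}]$ and $\tau[Y_t^{-k}R]$, and both are handled at once by the conjugation relation $RY_tR=Y_t^{-1}$, which gives $\tau[Y_t^{-k}]=\tau[Y_t^{k}]$ (so the moments are indeed real, with no appeal to Proposition \ref{pro1} needed) and $\tau[Y_t^{-k}R]=\tau[RY_t^kRR]=\tau[Y_t^kR]$; you silently used the latter when doubling the parity sums. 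On balance your argument is fully self-contained and structurally more illuminating --- it explains \emph{why} the odd-word contribution is $t$-independent (it sits in the commutant picture of the two symmetries) --- whereas the paper's proof is shorter only because it imports the enumeration from \cite{Dem-Ham-Hmi}.
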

\begin{proof}
We write
\begin{equation*}
\tau[(PU_tQU_t^{*})^n] = \frac{1}{2^{2n}}\tau[(({\bf 1} + R)U_t({\bf 1} + S)U_t^{*})^n].
\end{equation*}
Let $\tilde{S}:=U_tSU_t^{*}$. Then writing
\begin{equation*}
({\bf 1} + R)U_t({\bf 1} + S)U_t^{*}=({\bf 1} + R)({\bf 1} + \tilde{S}).
\end{equation*}
one easily can see that the same enumeration techniques used in \cite[Proposition 4.1]{Dem-Ham-Hmi} to expend $\tau[(({\bf 1} + R)({\bf 1} + \tilde{S}))^n]$
 remain valid, but here we will take into account the contribution of words formed by an odd number of letters. Using the trace property and the relations $R^2=\tilde{S}^2=\bf 1$, this contribution is $\tau(R)+\tau(S)$ up to a positive integer $N$. By letting $R=S$ and using the expansion in \cite[p 1366]{Dem-Ham-Hmi}, we get $2N=2^{2n-1}$ and hence the desired equality follows.

\end{proof}

Let 
\begin{equation*}
G(t,z):= \frac{1}{z}+\sum_{n \geq 1} \frac{\tau[(PU_tQU_t^{*})^n]}{z^{n+1}}, \quad t \geq 0, |z| > 1, 
\end{equation*}
be the Cauchy transform of the process $X_t$. 
The following corollary gives a relationship between  $G$ and the Herglotz transform of $\nu_t$.
\begin{cor}
One has
\begin{equation}\label{cauherg} 
G(t,z) =\frac{1}{2z}+\frac{\alpha+\beta}{4z(z-1)}+ \frac{H(t,g(z))}{2\sqrt{z^2-z}}, \quad t \geq 0,|z|>1,
\end{equation}
where \footnote{The principal branch of the square root is taken.}
\begin{equation*}
 g(z)=2z-1+2\sqrt{z^2-z}.
\end{equation*}
\end{cor}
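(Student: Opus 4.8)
The plan is to substitute the moment identity from the preceding Proposition directly into the series defining $G(t,z)$ and to recognise the three resulting generating functions in closed form. Write $m_n:=\tau[(PU_tQU_t^{*})^n]$ and $f_k:=\tau[(RU_tSU_t^{*})^k]$, and recall $\tau(R+S)=\alpha+\beta$. Since $G(t,z)=\tfrac1z+\sum_{n\geq1}m_n z^{-(n+1)}$, I would split $G$, according to the three terms of $m_n$, into a central-binomial piece, a constant piece, and a ``mixed'' piece carrying the $f_k$, and evaluate each separately; the leading $1/z$ is kept with the first piece.

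For the first piece I use $\sum_{n\geq0}\binom{2n}{n}x^n=(1-4x)^{-1/2}$ at $x=1/(4z)$, so that $\sum_{n\geq1}\tfrac{1}{2\cdot4^n}\binom{2n}{n}z^{-(n+1)}=\tfrac{1}{2z}\big((1-1/z)^{-1/2}-1\big)=\tfrac{1}{2\sqrt{z^2-z}}-\tfrac{1}{2z}$; adding the leading $1/z$ of $G$ leaves $\tfrac{1}{2z}+\tfrac{1}{2\sqrt{z^2-z}}$. The constant piece coming from $\tfrac{\alpha+\beta}{4}$ is a plain geometric series, $\tfrac{\alpha+\beta}{4}\sum_{n\geq1}z^{-(n+1)}=\tfrac{\alpha+\beta}{4z(z-1)}$, which is already the middle term of the claim. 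Thus the first term $\tfrac{1}{2z}$ and the middle term are accounted for, and I am left to match $\tfrac{1}{2\sqrt{z^2-z}}$ plus the mixed piece against $\tfrac{H(t,g(z))}{2\sqrt{z^2-z}}$.

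The heart is the mixed piece $\sum_{n\geq1}4^{-n}z^{-(n+1)}\sum_{k=1}^{n}\binom{2n}{n-k}f_k$. Interchanging the order of summation rewrites it as $\sum_{k\geq1}f_k\,S_k(z)$ with $S_k(z)=\tfrac1z\sum_{n\geq k}\binom{2n}{n-k}(4z)^{-n}$, and I invoke the Catalan-type identity $\sum_{n\geq k}\binom{2n}{n-k}x^n=(1-4x)^{-1/2}\,(C(x)-1)^k$, where $C(x)=\tfrac{1-\sqrt{1-4x}}{2x}$ satisfies $xC(x)^2=C(x)-1$; this is the one nontrivial combinatorial input, which I would verify either from the Catalan recursion or by a short coefficient check. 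Evaluating at $x=1/(4z)$ gives $C(1/(4z))-1=2z-1-2\sqrt{z^2-z}=:g(z)$ and $S_k(z)=g(z)^k/\sqrt{z^2-z}$, so the mixed piece equals $\tfrac{1}{\sqrt{z^2-z}}\sum_{k\geq1}f_k\,g(z)^k=\tfrac{H(t,g(z))-1}{2\sqrt{z^2-z}}$. Combining the $-\tfrac{1}{2\sqrt{z^2-z}}$ here with the leftover $+\tfrac{1}{2\sqrt{z^2-z}}$ from the first piece completes $H(t,g(z))$ in the numerator, yielding the stated formula.

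The main obstacle is the branch bookkeeping. The computation naturally produces, for $|z|>1$, the quantity $2z-1-2\sqrt{z^2-z}$ as the argument of $H$, and one must certify that this is the root of $\zeta^2-(4z-2)\zeta+1=0$ lying in $\mathbb{D}$: this is forced both because $\sum_k f_k g(z)^k$ converges only for $|g(z)|<1$ and because $H(t,\cdot)$ is defined only on $\mathbb{D}$. Since the two roots $2z-1\pm2\sqrt{z^2-z}$ multiply to $1$, exactly one lies in $\mathbb{D}$, and the principal-branch convention for $\sqrt{z^2-z}$ must be fixed so that $g(z)$ is that interior root (for real $z>1$ it is $2z-1-2\sqrt{z^2-z}\in(0,1)$); pinning down this branch is the only delicate point, the remainder being the two elementary generating-function evaluations above.
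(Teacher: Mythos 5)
Your proposal is correct and is essentially the paper's own argument transported from $|z|<1$ to $|z|>1$: the paper makes the same three-way split of the moment formula and proves the equivalent identity for the moment generating function $\psi_{\mu_t}$ on the disc, using the identity $\sum_{j\geq0}\binom{2j+2k}{j}4^{-j}z^{j}=4^{k}(1-z)^{-1/2}(1+\sqrt{1-z})^{-2k}$, which is exactly your Catalan-type identity $\sum_{n\geq k}\binom{2n}{n-k}x^{n}=(1-4x)^{-1/2}(C(x)-1)^{k}$ under $x=z/4$ (the paper cites it and quotes $\lvert g(1/z)\rvert\leq\lvert z\rvert<1$ from Dem--Ham--Hmi, where you derive the identity and fix the branch via the product of the two roots). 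Your branch bookkeeping is also sound, and indeed necessary: since $H(t,\cdot)$ lives on $\mathbb{D}$, $g(z)$ must denote the root of $\zeta^{2}-(4z-2)\zeta+1=0$ lying in the disc, i.e.\ $2z-1-2\sqrt{z^{2}-z}$ for real $z>1$, which is the reading intended by the paper's footnote.
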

\begin{proof}
We will prove the following equivalent relation
\begin{equation*}
\psi_{\mu_t}(z) =\frac{(\alpha+\beta+2)z-2}{4(1-z)}+ \frac{H(t,g(1/z))}{2\sqrt{1-z}}, \quad t \geq 0,|z|<1,
\end{equation*}
 satisfied by the moment generating function of the process $X_t$ 
\begin{equation*}
\psi_{\mu_t}(z):= \sum_{n \geq 1} \tau[(PU_tQU_t^{*})^n]z^n, \quad t \geq 0, |z| < 1.
\end{equation*}

Before going into the details, recall from \cite[p. 1359]{Dem-Ham-Hmi} that $|g(1/z)|\leq |z|<1$ in the open unit disc, then this last relation
makes sense for all $|z|<1$. 
 Now multiplying (\ref{Binom}) by $z^n$ and summing  over $n\geq 1$, we get
 \begin{align*}
\psi_{\mu_t}(z)=\frac{1}{2\sqrt{1-z}}-\frac{1}{2}+\frac{(\alpha+\beta)z}{4(1-z)}+\sum_{n\geq1} \frac{z^n}{2^{2n}}\sum_{k=1}^n \binom{2n}{n-k}\tau[(RU_tSU_t^*)^k].
\end{align*}
 But, this last term rewrites, after permutation of sums and reindexing $j=n-k$, as
 \begin{align*}
\sum_{n\geq1} \frac{z^n}{2^{2n}}\sum_{k=1}^n \binom{2n}{n-k}\tau[(RU_tSU_t^*)^k]=\sum_{k\geq1} \tau[(RU_tSU_t^*)^k]\sum_{j\geq0}\frac{z^{j+k}}{2^{2j+2k}} \binom{2j+2k}{j}.
\end{align*}
Using the identity (see, e.g. \cite{Dem-Ham-Hmi})
\begin{equation*}
 \sum_{j\geq0}\binom{2j+2k}{j}\frac{z^j}{2^{2j}}=\frac{2^{2k}}{\sqrt{1-z}}(1+\sqrt{1-z})^{-2k},\quad |z|<1,
\end{equation*}
we get
\begin{align*}
\psi_{\mu_t}(z) &=\frac{1}{2\sqrt{1-z}}-\frac{1}{2}+\frac{(\alpha+\beta)z}{4(1-z)}+\frac{1}{\sqrt{1-z}}\sum_{k\geq1}\frac{\tau[(RU_tSU_t^*)^k]z^k}{(1+\sqrt{1-z})^{2k}}
\\&=\frac{1}{2\sqrt{1-z}}-\frac{1}{2}+\frac{(\alpha+\beta)z}{4(1-z)}+\frac{1}{\sqrt{1-z}}\frac{H(t,g(1/z))-1}{2}
\\&=-\frac{1}{2}+\frac{(\alpha+\beta)z}{4(1-z)}+\frac{H(t,g(1/z))}{2\sqrt{1-z}},
\end{align*}
which proves the corollary.
\end{proof}

We are now ready to prove the relationship between the spectral measure of $X_t$ and $Y_t$: $\mu_t \leftrightsquigarrow \nu_t$.
\begin{teo}
Let $\tilde{\mu}_t(d\theta)$ be the positive measure on $[0,\pi]$ obtained from $\mu_t(dx)$ via the variable change $x=\cos^2(\theta/2)$ and  $\hat{\mu}_t:=\frac{1}{2}\left(\tilde{\mu}_t+\left(\tilde{\mu}_t|_{(0,\pi)}\right)\circ j^{-1}\right)$ it's symmetrization on $(-\pi,\pi)$ with the mapping $j:\theta\in(0,\pi)\mapsto-\theta\in(-\pi,0)$. Then, the two measures $\mu_t$ and $\nu_t$ are related via 
\begin{equation}\label{measure}
\nu_t=2\hat{\mu}_t-\frac{2-\alpha-\beta}{2}\delta_{\pi}-\frac{\alpha+\beta}{2}\delta_0.
\end{equation}
\end{teo}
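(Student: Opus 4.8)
The plan is to prove \eqref{measure} by showing that the two measures have the same Herglotz transform on $\mathbb{D}$ and then invoking uniqueness in the Herglotz representation (the same theorem, \cite[Theorem 1.8.9]{CMR}, used to produce $\gamma_t$). Write $\mathcal{H}_\sigma(z):=\int_{\mathbb{T}}\frac{\zeta+z}{\zeta-z}\,d\sigma(\zeta)$, so that $\mathcal{H}_{\nu_t}=H(t,\cdot)$, while $\mathcal{H}_{\delta_0}(z)=\frac{1+z}{1-z}$ and $\mathcal{H}_{\delta_\pi}(z)=\frac{1-z}{1+z}$. It therefore suffices to establish the transform identity
\[
\mathcal{H}_{2\hat\mu_t}(z)=H(t,z)+\frac{\alpha+\beta}{2}\,\frac{1+z}{1-z}+\frac{2-\alpha-\beta}{2}\,\frac{1-z}{1+z},
\]
since rearranging it reads $H(t,z)=\mathcal{H}_{2\hat\mu_t-\frac{2-\alpha-\beta}{2}\delta_\pi-\frac{\alpha+\beta}{2}\delta_0}(z)$.

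First I would record the ``folding kernel'' produced by $x=\cos^2(\theta/2)=\frac{1+\cos\theta}{2}$, $\zeta=e^{i\theta}$. Since $2\hat\mu_t$ is the reflection-symmetric extension of $\tilde\mu_t$ (the map $x=\cos^2(\theta/2)$ sends $[0,1]$ two-to-one onto the symmetric pairs $\{\zeta,\bar\zeta\}$), its Herglotz transform may be computed with the $\zeta\mapsto\bar\zeta$–symmetrized kernel, and a short computation gives
\[
\frac{1}{2}\left(\frac{\zeta+z}{\zeta-z}+\frac{\zeta^{-1}+z}{\zeta^{-1}-z}\right)=\frac{1-z^2}{(\zeta-z)(\zeta^{-1}-z)}=\frac{1-z^2}{(1+z)^2-4zx}=\frac{1-z^2}{4z}\,\frac{1}{c(z)-x},\qquad c(z):=\frac{(1+z)^2}{4z}.
\]
Integrating against $\tilde\mu_t$ converts the Herglotz integral into the Cauchy transform of $\mu_t$ at $c(z)$, namely $\mathcal{H}_{2\hat\mu_t}(z)=\frac{1-z^2}{2z}\,G(t,c(z))$.

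Next I would feed in the corollary \eqref{cauherg}. The algebra is driven by three facts: $g(c(z))=z$ (so $c(z)$ is the pre-image of $z$ under the folding map), together with $c(z)-1=\frac{(1-z)^2}{4z}$ and $\sqrt{c(z)^2-c(z)}=\frac{1-z^2}{4z}$. With these the term $\frac{H(t,g(\cdot))}{2\sqrt{c^2-c}}$ returns $H(t,z)$ after multiplication by $\frac{1-z^2}{2z}$, while $\frac{1}{2c}$ and $\frac{\alpha+\beta}{4c(c-1)}$ contribute $\frac{1-z}{1+z}$ and $\frac{2(\alpha+\beta)z}{1-z^2}$. Using $\frac{2(\alpha+\beta)z}{1-z^2}=\frac{\alpha+\beta}{2}\left(\frac{1+z}{1-z}-\frac{1-z}{1+z}\right)$ the three pieces collapse to
\[
\frac{1-z^2}{2z}\,G(t,c(z))=H(t,z)+\frac{\alpha+\beta}{2}\,\frac{1+z}{1-z}+\frac{2-\alpha-\beta}{2}\,\frac{1-z}{1+z},
\]
which is exactly the displayed transform identity; recognizing the last two rational functions as $\mathcal{H}_{\delta_0}$ and $\mathcal{H}_{\delta_\pi}$ and appealing to uniqueness yields \eqref{measure}.

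The step I expect to be the real obstacle is the bookkeeping at the two fixed points $\zeta=\pm1$ of $\zeta\mapsto\bar\zeta$, that is at $\theta\in\{0,\pi\}$, equivalently $x\in\{1,0\}$. There $\mu_t$ carries genuine mass (in particular the structural atom at $x=0$ coming from $\mathbf 1-P$), and these are precisely the points where $\frac{1}{c(z)-x}$ and $G(t,c(z))$ develop poles as $z\to\pm1$; one must confirm that the reflection-symmetric extension weights these self-conjugate points so that $\frac{1-z^2}{2z}G(t,c(z))$ is genuinely $\mathcal{H}_{2\hat\mu_t}$, and that the correct branch of the square root is taken so that $g(c(z))=z$ rather than $1/z$. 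Because these are the only places branch and weight conventions intervene, I would run an independent moment check that pins the $\delta$–coefficients down unambiguously: expanding $x^n=4^{-n}\zeta^{-n}(1+\zeta)^{2n}$ in \eqref{Binom} and using $\sum_{j=0}^{n-1}\binom{2n}{j}=\frac12\left(4^n-\binom{2n}{n}\right)$ together with $\sum_{j=0}^{n-1}(-1)^j\binom{2n}{j}=\frac12(-1)^{n+1}\binom{2n}{n}$, one recovers
\[
\int_{\mathbb{T}}\cos(k\theta)\,d\nu_t=2\int_{0}^{\pi}\cos(k\theta)\,d\tilde\mu_t-\frac{\alpha+\beta}{2}-\frac{2-\alpha-\beta}{2}(-1)^k,
\]
which is the moment form of \eqref{measure} and provides a transform-free corroboration of the atom weights.
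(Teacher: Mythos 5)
Your argument is correct and, in its main line, is the paper's own proof run in the opposite direction: the paper starts from \eqref{cauherg}, performs the substitution $\zeta=g(z)$ (equivalently $z=c(\zeta)=\frac{(2+\zeta+\zeta^{-1})}{4}$), and rewrites the resulting Cauchy kernel as the conjugation-symmetrized Herglotz kernel --- your folding identity $\frac12\left(\frac{\zeta+z}{\zeta-z}+\frac{\zeta^{-1}+z}{\zeta^{-1}-z}\right)=\frac{1-z^2}{(\zeta-z)(\zeta^{-1}-z)}$ is exactly the paper's computation $\frac{\zeta^{-1}-\zeta}{2+\zeta+\zeta^{-1}-4\cos^2\frac{\theta}{2}}=\frac{e^{i\theta}}{e^{i\theta}-\zeta}+\frac{e^{i\theta}}{e^{-i\theta}-\zeta}-1$ with the roles of the two variables exchanged --- before concluding by uniqueness of the Herglotz representation, just as you do. Where your write-up genuinely adds something is the endpoint bookkeeping you flag at $\theta\in\{0,\pi\}$: this is a real issue, not a formality. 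The kernel computation (yours and the paper's alike) produces $2\int_{[0,\pi]}(\text{folded kernel})\,d\tilde{\mu}_t$, in which the atoms of $\tilde{\mu}_t$ at $0$ and $\pi$ enter with doubled weight, whereas the literal definition $\hat{\mu}_t=\frac12\left(\tilde{\mu}_t+\left(\tilde{\mu}_t|_{(0,\pi)}\right)\circ j^{-1}\right)$ leaves them with their original weight in $2\hat\mu_t$; since $\mu_t\{0\}\geq 1-\min\{\tau(P),\tau(Q)\}>0$ in general, the two conventions differ (a quick test: $P=Q$, $t=0$ gives $\nu_0=\delta_0$, which matches the doubled-endpoint convention and makes the literal one produce a signed measure). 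Your moment verification settles this unambiguously, and in fact does more than corroborate: since the coefficient matrix $\binom{2n}{n-k}$, $1\leq k\leq n$, is unitriangular, the relation \eqref{Binom} determines $2\int\cos(k\theta)\,d\tilde{\mu}_t-\int_{\mathbb{T}}\cos(k\theta)\,d\nu_t$ uniquely for every $k$, and because $\nu_t$ and the symmetrized measure are both invariant under $\theta\mapsto-\theta$, equality of cosine moments already identifies the measures --- so your moment check is a self-contained second proof that bypasses the branch and endpoint-weight delicacies of the transform computation altogether.
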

\begin{proof}
By \eqref{cauherg}, we have
\begin{align*}
H(t,g(z)) &=2\sqrt{z^2-z}\left(G(t,z) -\frac{2-\alpha-\beta}{4z}-\frac{\alpha+\beta}{4(z-1)} \right).
\end{align*}
Letting $\tilde{\mu}_t(d\theta)=\mu_t(dx)$ with $x=\cos^2(\theta/2), \theta\in[0,\pi]$, we get
\begin{align*}
H(t,g(z)) &=-2\sqrt{z^2-z}\left(\int_0^{\pi}\frac{1}{z-\cos^2\frac{\theta}{2}}\tilde{\mu}_t(d\theta) -\frac{2-\alpha-\beta}{4z}-\frac{\alpha+\beta}{4(z-1)} \right).
\end{align*}
Next, we perform the variable change
\begin{equation*}
\zeta:=g(z)=2z-1+2\sqrt{z^2-z} \Leftrightarrow z=\frac{2+\zeta+\zeta^{-1}}{4},
\end{equation*}
to get
\begin{align*}
H(t,\zeta) &=\frac{\zeta^{-1}-\zeta}{2}\left(\int_0^{\pi}\frac{1}{\frac{2+\zeta+\zeta^{-1}}{4}-\cos^2\frac{\theta}{2}}\tilde{\mu}_t(d\theta) -\frac{2-\alpha-\beta}{2+\zeta+\zeta^{-1}}-\frac{\alpha+\beta}{-2+\zeta+\zeta^{-1}} \right)
\\&=\int_0^{\pi}\frac{2(\zeta^{-1}-\zeta)}{2+\zeta+\zeta^{-1}-4\cos^2\frac{\theta}{2}}\tilde{\mu}_t(d\theta) -\frac{(2-\alpha-\beta)(1-\zeta)}{2(1+\zeta)}-\frac{(\alpha+\beta)(1+\zeta)}{2(1-\zeta)}.
\end{align*}
But since
\begin{align*}
 \frac{\zeta^{-1}-\zeta}{2+\zeta+\zeta^{-1}-4\cos^2\frac{\theta}{2}}&=\frac{\zeta^{-1}-\zeta}{\zeta+\zeta^{-1}-2\cos\theta}
 \\&=\frac{1-\zeta^2}{\zeta^2-2\zeta\cos\theta+1}
 \\&=\frac{e^{i\theta}}{e^{i\theta}-\zeta}+\frac{e^{i\theta}}{e^{-i\theta}-\zeta}-1,
\end{align*}
then
\begin{align*}
H(t,\zeta) &=2\int_0^{\pi}\left( \frac{e^{i\theta}}{e^{i\theta}-\zeta}+\frac{e^{-i\theta}}{e^{-i\theta}-\zeta}-1\right)\tilde{\mu}_t(d\theta) -\frac{(2-\alpha-\beta)(1-\zeta)}{2(1+\zeta)}-\frac{(\alpha+\beta)(1+\zeta)}{2(1-\zeta)}.
\end{align*}
Thus, using the symmetrization $\hat{\mu}_t:=\frac{1}{2}\left(\tilde{\mu}_t+\left(\tilde{\mu}_t|_{(0,\pi)}\right)\circ j^{-1}\right)$ with $j:\theta\in(0,\pi)\mapsto-\theta\in(-\pi,0)$, we get
\begin{align*}
H(t,\zeta) &=\int_{-\pi}^{\pi}\frac{e^{i\theta}+\zeta}{e^{i\theta}-\zeta}(2\hat{\mu}_t -\frac{2-\alpha-\beta}{2}\delta_{\pi}-\frac{\alpha+\beta}{2}\delta_0)(d\theta).
\end{align*}
This proves the theorem.
\end{proof}
\begin{rem}\label{relationship}
The relationship $\mu_t\leftrightsquigarrow\nu_t$ enable us, in particular, to 
retrieve the decomposition of $\nu_\infty$ already obtained in section 2 from the spectral measure $\mu_\infty$ (given by the free multiplicative convolution of the spectral measure of $P$ and $UQU^*$ with $U$ is a Haar unitary free from $\{P,Q\}$ (see, \cite[Example 3.6.7]{Dyk-Nic-Voi})).  Indeed, we have $\hat{\delta_0}=\delta_\pi,\hat{\delta_1}=\delta_0$ and if $\mu_t$ has the density $h(x)$ with respect to $dx$ on $[0,1]$, then  $\nu_t$ has the density $\hat{h}(\theta)$ with respect to the (no-normalized) Lebesgue measure $d\theta$ on $\mathbb{T}=(-\pi,\pi]$ with $\hat{h}(\theta)=h(\cos^2(\theta/2))|\sin\theta|/4$. 
\end{rem}

By virtue of  the fact that $P$ and $U_tQU_t^*$ are in generic position for any $t>0$ (see, e.g., \cite[Remark 3.5]{Izu-Ued}), we have
\begin{pro}\label{atom}
For every $t>0$,
the positive measure 
$
\sigma_t:=\nu_t-a\delta_{\pi}-b\delta_0
$
has no atom at both 0 and $\pi$.
Moreover, at $t=0$, we have $\sigma_0\{0\}\geq0$ and $\sigma_0\{\pi\}\geq0$ with equalities (i.e. $\sigma_0$ has no atom at both 0 and $\pi$), if and only if the projections $P$ and $Q$ are in generic position. 
\end{pro}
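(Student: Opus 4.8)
The plan is to reduce both assertions to a computation of the two boundary atoms $\nu_t(\{1\})$ and $\nu_t(\{-1\})$, i.e. the masses at $\theta=0$ and $\theta=\pi$. Since $\sigma_t$ is obtained from the positive measure $\nu_t$ by removing point masses located only at $\pm1$, the measure $\sigma_t$ is positive, and it is free of atoms at $0$ and $\pi$, precisely when
\begin{equation*}
\nu_t(\{1\})=b\qquad\text{and}\qquad \nu_t(\{-1\})=a,
\end{equation*}
while in general $\sigma_t(\{0\})=\nu_t(\{1\})-b$ and $\sigma_t(\{\pi\})=\nu_t(\{-1\})-a$. Thus everything comes down to identifying these two atoms and comparing them with $a,b$.

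First I would identify the atoms through the pair structure. Writing $N:=U_tQU_t^{*}$, we have $Y_t=(2P-\mathbf 1)(2N-\mathbf 1)$, a product of two symmetries, so $\nu_t$ is the spectral law of this unitary. By the two-projection (Halmos) decomposition of $\{P,N\}''$, the eigenvalue $+1$ (resp. $-1$) of $Y_t$ is attained exactly on the degenerate common subspaces, whereas the generic part carries no mass at $\pm1$. This yields
\begin{equation*}
\nu_t(\{1\})=\tau(P\wedge N)+\tau(P^{\perp}\wedge N^{\perp}),\qquad \nu_t(\{-1\})=\tau(P\wedge N^{\perp})+\tau(P^{\perp}\wedge N).
\end{equation*}
Equivalently, the same two numbers can be read off from the relationship \eqref{measure} together with Remark \ref{relationship}, since the atoms of $\mu_t$ at $x=1$ and $x=0$ are transported to $\theta=0$ and $\theta=\pi$. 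As a sanity check, at $t=\infty$ the pair $(P,N)$ is free and, using $\tau(N)=\tau(Q)$, these formulas return the masses $b$ and $a$ found in Section 2.

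Next I would invoke the Kadison-type bounds
\begin{equation*}
\tau(P\wedge N)\ge(\tau(P)+\tau(N)-1)_{+},\qquad \tau(P^{\perp}\wedge N^{\perp})\ge(1-\tau(P)-\tau(N))_{+},
\end{equation*}
together with the two analogous inequalities for $P\wedge N^{\perp}$ and $P^{\perp}\wedge N$. Substituting $\tau(P)=(1+\alpha)/2$, $\tau(N)=\tau(Q)=(1+\beta)/2$ and using $(x)_{+}+(-x)_{+}=|x|$ gives $\nu_t(\{1\})\ge\frac{|\alpha+\beta|}{2}=b$ and $\nu_t(\{-1\})\ge\frac{|\alpha-\beta|}{2}=a$. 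This simultaneously proves $\sigma_t\ge0$ and $\sigma_0(\{0\}),\sigma_0(\{\pi\})\ge0$. For $t>0$ the pair $(P,N)$ is in generic position, which for projections of prescribed traces is exactly the requirement that each of the four intersection traces attains its minimal value; hence both inequalities become equalities and $\sigma_t$ has no atom at $0$ or $\pi$. For $t=0$ (so $N=Q$) the two displayed sums equal $b$ and $a$ if and only if each of the four intersection traces is minimal, i.e. if and only if $P$ and $Q$ are in generic position, which is the asserted equivalence.

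The main obstacle is the rigorous identification of the boundary atoms in the second step: one must justify that the generic part of the two-projection decomposition contributes no mass at $\pm1$ and pin down the degenerate contributions as the four intersection traces; and, should one prefer the route through \eqref{measure}, one must keep careful track of the normalisation relating the compressed and ambient pictures of $\mu_t$. Once the atom formula is secured, the positivity of $\sigma_t$, its atomlessness for $t>0$, and the equivalence at $t=0$ all follow from the trace inequalities and their equality cases.
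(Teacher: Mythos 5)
Your proof is correct, but it takes a genuinely different route from the paper's. The paper obtains the statement as a corollary of the transfer formula \eqref{measure}: it rewrites $\sigma_t$ as in \eqref{sigma} and then quotes \cite[Proposition 3.1]{Izu-Ued} wholesale for the atoms of $\mu_t$ at $x=0$ and $x=1$ and their equality case, together with the generic-position fact for $t>0$ from \cite[Remark 3.5]{Izu-Ued} --- the latter being the same dynamical input you invoke. You instead compute the atoms of $\nu_t$ directly: since $Y_t$ is the product of the symmetries $2P-1$ and $2N-1$, its eigenspaces at $\pm1$ are $\ker(P-N)=(P\wedge N)\vee(P^{\perp}\wedge N^{\perp})$ and $\ker(P+N-1)=(P\wedge N^{\perp})\vee(P^{\perp}\wedge N)$, so the step you single out as the main obstacle is standard two-projection theory and poses no real difficulty; the lower bounds $\nu_t\{1\}\geq b$, $\nu_t\{-1\}\geq a$ then follow from $\tau(p\wedge q)\geq(\tau(p)+\tau(q)-1)_{+}$ in a finite von Neumann algebra, exactly as you say. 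Your route buys explicit atom formulas, a direct proof that $\sigma_t\geq0$, and independence from the $\mu_t\leftrightsquigarrow\nu_t$ theorem --- a genuine advantage, since Proposition \ref{atom} is already invoked in Section 3 right after \eqref{defK}, while \eqref{measure} is only proved in Section 4, so your argument removes that forward reference; the paper's route, in exchange, is shorter given the machinery and produces the identity \eqref{sigma}, which is reused in the proof of the final theorem. Two points to nail down in your write-up: (i) ``generic position'' must be taken in the relaxed sense implicit in \cite{Izu-Ued}, namely that all four intersection traces attain their minimal values, since the strict Halmos condition (all four intersections zero) is impossible unless $\tau(P)=\tau(Q)=1/2$; your equality-case equivalence is correct precisely under that reading, and it is the reading under which the proposition itself is true for unbalanced traces; (ii) to pass from minimality of your two displayed sums to minimality of all four intersection traces, record the identities $\tau(P\wedge N)-\tau(P^{\perp}\wedge N^{\perp})=\tau(P)+\tau(N)-1$ and $\tau(P\wedge N^{\perp})-\tau(P^{\perp}\wedge N)=\tau(P)-\tau(N)$, which pair up the four intersections and close the equivalence.
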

\begin{proof}
By \eqref{measure}, we have
\begin{align*}
\sigma_t&=2\hat{\mu_t}-\frac{2-\alpha-\beta+|\alpha-\beta|}{2}\delta_{\pi}-\frac{\alpha+\beta+|\alpha+\beta|}{2}\delta_0
\\&=2\hat{\mu_t}-(1-\min\{\alpha,\beta\})\delta_{\pi}-\max\{\alpha+\beta,0\}\delta_0.
\end{align*}
Since $\alpha=2\tau(P)-1$ and $\beta=2\tau(Q)-1$, 
\begin{align}\label{sigma}
\sigma_t&=2\left[\hat{\mu_t}-(1-\min\{\tau(P),\tau(Q)\})\delta_{\pi}-\max\{\tau(P)+\tau(Q)-1,0\}\delta_0\right].
\end{align}
The desired assertion immediately follows from \cite[Proposition 3.1]{Izu-Ued}.
\end{proof}

\begin{pro}\label{continspectrum}
For every $t>0$, 0 and $\pi$ does not belong to the continuous singular spectrum of $\sigma_t$.
\end{pro}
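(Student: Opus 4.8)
The plan is to reduce the statement to a boundary-regularity property of a Herglotz transform and then extract that regularity from the subordination relation of Proposition \ref{subor}. First I would record the elementary decomposition: writing $L(z):=a\frac{1-z}{1+z}+b\frac{1+z}{1-z}$ for the Herglotz transform of the atomic measure $a\delta_{\pi}+b\delta_0$, one has $H(t,z)=H_{\sigma_t}(z)+L(z)$, where $H_{\sigma_t}$ denotes the Herglotz transform of $\sigma_t$; hence, by \eqref{defK}, $K(t,z)^2=H^2-L^2$ gives $H_{\sigma_t}=\sqrt{K^2+L^2}-L$. Since deleting atoms does not alter the continuous singular part, it suffices to show that $\sigma_t$ is absolutely continuous in a neighbourhood of $\theta=0$ and $\theta=\pi$, equivalently that $H_{\sigma_t}$ continues analytically across the boundary points $z=1$ and $z=-1$. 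By the standard boundary theory for Herglotz functions (the Stieltjes-type inversion underlying \cite{CMR}), analytic continuation of such a transform across an arc forces the representing measure to be absolutely continuous there, so no continuous singular mass can sit at $0$ or $\pi$.

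The heart of the argument is to prove that $K(t,\cdot)$ extends analytically across $z=\pm1$. For this I would invoke the subordination identity $K(t,z)=K(0,\eta_t(z))$ of Proposition \ref{subor}. By Lemma \ref{distance}, $\overline{\Omega_t}$ avoids $1$ and $-1$, so the continuous boundary values $\eta_t(\pm1)=x(t)_\pm$ of Proposition \ref{extension}(1) lie in the open interval $(-1,1)\subset\mathbb{D}$. Thus $\eta_t$ sends the boundary point $z=\pm1$ into the interior of $\mathbb{D}$, so Proposition \ref{extension}(2) permits analytic continuation of $\eta_t$ to a full neighbourhood of each of $z=\pm1$; and $K(0,\cdot)$ is analytic at the interior points $x(t)_\pm$. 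Composing, $K(t,z)=K(0,\eta_t(z))$ is analytic near $z=1$ and near $z=-1$.

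It then remains to pass from $K$ to $H_{\sigma_t}$. Near $z=1$ I would set $m(z):=(1-z)L(z)=a\frac{(1-z)^2}{1+z}+b(1+z)$, analytic with $m(1)=2b$, and $G(z):=m(z)^2+(1-z)^2K(t,z)^2$, analytic with $G(1)=4b^2$. Then $(1-z)H=\sqrt{G}$ (principal branch, legitimate when $b>0$ since $G(1)\neq0$), whence $H_{\sigma_t}=\sqrt{K^2+L^2}-L=\frac{(1-z)K^2}{\sqrt{G}+m}$ is analytic across $z=1$, the denominator being $4b\neq0$ there; note $H_{\sigma_t}(1)=0$, consistent with the symmetry of $\nu_t$ coming from \eqref{measure}. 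The symmetric computation at $z=-1$ (with $a$ and $b$ interchanged, using $(1+z)L(z)$) gives analyticity across $z=-1$ when $a>0$. This produces the desired local absolute continuity of $\sigma_t$, settling the principal case.

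The main obstacle is the degenerate regime where the leading coefficient in the square-root reconstruction vanishes: when $b=0$ (resp. $a=0$) the function $L$ is already regular at $z=1$ (resp. $z=-1$) and $G(1)=0$, so the branch of $\sqrt{K^2+L^2}$ must be controlled directly, and one must separately guarantee, through the hypotheses $\nu_0\{0\}>0$ (resp. $\nu_0\{\pi\}>0$) feeding Lemma \ref{distance}, that $\eta_t(\pm1)$ still lands in the interior so that $K(t,\cdot)$ continues analytically. I expect the parity of $\nu_t$, which forces $H_{\sigma_t}(\pm1)=0$, to render the limiting behaviour of the square root harmless. The other point to pin down carefully is the precise Herglotz boundary theorem invoked, ensuring that analytic continuation of $K$ (and hence of $H_{\sigma_t}$) yields genuine absolute continuity of $\sigma_t$ near $0$ and $\pi$ rather than merely the existence of boundary values.
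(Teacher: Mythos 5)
Your argument is sound in the principal case and its core coincides with the paper's: both proofs continue $K(t,\cdot)$ analytically across $\pm1$ by combining the subordination $K(t,z)=K(0,\eta_t(z))$ of Proposition \ref{subor} with Lemma \ref{distance} and items (1)--(2) of Proposition \ref{extension}, so that $K(t,z)\to K(0,x(t)_{\pm})$ as $z\to\pm1^{\mp}$. The endgame, however, is genuinely different. The paper never reconstructs the Herglotz transform of $\sigma_t$ (which it denotes $L(t,z)$ --- beware that your $L$ is the paper's subtracted atomic part, not its $L$): it factors $K(t,z)^2=L(t,z)\bigl(L(t,z)+2a\frac{1-z}{1+z}+2b\frac{1+z}{1-z}\bigr)$, observes that the second factor blows up at $\pm1$ while the product stays bounded by the continuation of $K$, deduces that the Poisson integral $\Re L(t,z)$ tends to $0$ as $z\to\pm1^{\mp}$, and concludes via the boundary theory of \cite{CMR} (Proposition 1.3.11 and equation (1.8.8)). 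Your reconstruction $H_{\sigma_t}=\frac{(1-z)K^2}{\sqrt{G}+m}$, with denominator tending to $4b\neq0$, is a correct alternative when $b>0$ (and symmetrically at $z=-1$ when $a>0$), and it buys more: full analytic continuation of $H_{\sigma_t}$ across $\pm1$, hence absolute continuity of $\sigma_t$ near $\theta=0$ and $\theta=\pi$, which is strictly stronger than exclusion from the continuous singular spectrum. The price is the branch bookkeeping that the paper's vanishing-Poisson-integral route avoids entirely.

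The degenerate case you flag as the main obstacle in fact closes easily, though not by the parity argument you sketch. If $b=0$, your $L$ is analytic at $z=1$ with $L(1)=0$, so the radicand $K(t,z)^2+L(z)^2$ is analytic near $z=1$ with value $K(0,x(t)_+)^2$ there; since $K(0,\cdot)$ is the Herglotz transform of the probability measure $\gamma_0$ and $x(t)_+\in\mathbb{D}$, one has $\Re K(0,x(t)_+)>0$, whence $K(0,x(t)_+)^2\notin(-\infty,0]$ and the branch of $\sqrt{K^2+L^2}$ agreeing with $H(t,\cdot)$ on $\mathbb{D}$ extends analytically across $z=1$; then $H_{\sigma_t}=\sqrt{K^2+L^2}-L$ continues as well. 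Parity of $\nu_t$ only makes $H_{\sigma_t}$ real on the reals and cannot select the branch; the strict positivity of the Poisson integral of $\gamma_0$ does. Note finally that in this regime the paper's own proof is silently incomplete --- when $b=0$ nothing blows up at $z=1$, so its key limit argument is vacuous there --- and that both proofs run under the hypotheses of Lemma \ref{distance} even though the proposition's statement does not carry them; your write-up at least makes that dependence explicit.
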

\begin{proof}
Let
\begin{align*}
L(t,z):=\int_{\mathbb{T}}\frac{e^{i\theta}+z}{e^{i\theta}-z}d\sigma_t(\theta)=H(t,z)-a\frac{1-z}{1+z}-b\frac{1+z}{1-z}.
\end{align*}
Then, \eqref{defK} rewrites as
\begin{align*}
K(t,z)^2&=H(t,z)^2-\left(a\frac{1-z}{1+z}+b\frac{1+z}{1-z} \right)^2
\\&=L(t,z)\left(L(t,z)+2a\frac{1-z}{1+z}+2b\frac{1+z}{1-z}\right).
\end{align*}
But from the second item in Proposition \ref{extension} together with the subordination relation in Proposition \ref{subor}, $K(t,.)$ has an analytic continuation in some neighborhoods of $\pm1$. Moreover,
\begin{align*}
\lim_{z\rightarrow\pm1^\mp}K(t,z)=\lim_{z\rightarrow\pm1^\mp}K(0,\eta_t(z))=K(0,x(t)_\pm)
\end{align*}
where $x(t)_\pm$ are the real boundaries of $\Omega_t$ (see the proof of Lemma \ref{distance}). Thus,
\begin{align*}
K(0,x(t)_\pm)^2=\lim_{z\rightarrow\pm1^\mp}L(t,z)\left(L(t,z)+2a\frac{1-z}{1+z}+2b\frac{1+z}{1-z}\right).
\end{align*}
Since
\begin{align*}
L(t,z)+2a\frac{1-z}{1+z}+2b\frac{1+z}{1-z}
\end{align*}
blows up as $z\rightarrow\pm1^\mp$,
$\lim_{z\rightarrow\pm1^\mp}L(t,z)=0$.
Consequently, the Poisson transform of $\sigma_t$, which is nothing but the real part of $L(t,z)$, vanishes as  $z\rightarrow\pm1^\mp$ and hence the desired assertion follows from Proposition 1.3.11 and equation (1.8.8) in \cite{CMR}.
\end{proof}
\begin{rem}
Note that when $\alpha=\beta=0$ (i.e. $\tau(P)=\tau(Q)=1/2$), the two measures $\sigma_{t/2}$ and $\gamma_{t/2}$ (recall the definition of $\gamma_t$ from \eqref{gamma}) coincide  with the spectral measure of the product of the free unitary Brownian motion with a free unitary operator whose distribution is $\sigma_0=2\hat{\mu_0}$. 
\end{rem}

\section{Free mutual information and orbital free entropy}
Here is our main application to the proof of the conjecture $i^*=-\chi_{orb}$. For a pair of projections $(P,Q)$,
we use the same definitions of the free mutual information $i^*( \mathbb{C}P+\mathbb{C}(I-P); \mathbb{C}Q+\mathbb{C}(I-Q) )$ (hereafter $i^*(P:Q)$) and the orbital free entropy $\chi_{orb}(P,Q)$ as expounded in the last section of the paper \cite{Izu-Ued}. We rerfer the reader to \cite{Hia-Pet, Hia-Ued, Voiculescu} for more information. Using subordination technology,  a partial result for the identity $i^*(P:Q)=-\chi_{orb}(P,Q)$ is obtained in \cite[Lemma 4.4]{Izu-Ued} (note that the function $H$ there is exactly  our $\frac{1}{4}K^2$). The result is as follows.
\begin{lem}(\cite{Izu-Ued}).\label{izue} 
If $K(t,.)$ define a function of Hardy class $H^{3}(\mathbb{D})$ for any $t>0$, then 
$
i^*\left( \mathbb{C}P+\mathbb{C}(I-P); \mathbb{C}Q+\mathbb{C}(I-Q) \right)=-\chi_{orb}\left(P,Q\right)
$.
\end{lem}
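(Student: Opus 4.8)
The statement cited as Lemma \ref{izue} is attributed entirely to \cite[Lemma 4.4]{Izu-Ued}, so the task is not to reprove it from scratch but to verify that the hypotheses of that external lemma are met in our setting, after correctly matching our notation to theirs. The plan is therefore to recall the structure of the Izumi--Ueda argument and check that the translation dictionary --- in particular the identification that their function $H$ equals our $\frac14 K^2$, flagged in the parenthetical remark --- is consistent with the regularity input demanded there. The conclusion $i^*(P:Q)=-\chi_{orb}(P,Q)$ then follows verbatim from their proof once the $H^3(\mathbb D)$ membership of $K(t,\cdot)$ is supplied as a black-box hypothesis.

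Concretely, I would first reduce the identity to the subordination machinery already in place: the free mutual information and the orbital free entropy are both expressed, in \cite{Izu-Ued}, through integrals of the (derivative of the) subordinate function along the liberation flow, and the gap between the two quantities is controlled by a boundary term. The radial L\"owner PDE from Remark \ref{constant}, namely $\partial_t K + zH(t,z)\partial_z K = 0$, together with the exact subordination relation $K(t,z)=K(0,\eta_t(z))$ of Proposition \ref{subor}, is exactly what feeds into that computation. I would then invoke the regularity facts we have already established, Proposition \ref{continspectrum} (absence of continuous singular spectrum at $0,\pi$) and Proposition \ref{atom} (control of the atoms), to guarantee that the relevant boundary integrals converge and that no mass escapes at $\pm 1$, which is where Lemma \ref{distance} and Corollary \ref{assumption} do the geometric work.

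The decisive step is the passage from finiteness of the various entropy/information integrals to their \emph{equality}, and this is precisely the point at which the $H^3(\mathbb D)$ hypothesis enters in \cite{Izu-Ued}: membership of $K(t,\cdot)$ in the Hardy class $H^3$ ensures enough integrability of the boundary values $K(t,e^{i\theta})$ so that the interchange of limits and integration, and the vanishing of the error term as $t\to\infty$, are both justified. Since the lemma is quoted with $H^3(\mathbb D)$ as an explicit assumption, my proof consists simply in checking that all the ancillary hypotheses of \cite[Lemma 4.4]{Izu-Ued} --- analytic continuation of $K(t,\cdot)$ past $\pm1$, the Herglotz representation \eqref{gamma} of $K$, and the identification of our $\frac14 K^2$ with their $H$ --- hold under our conventions, and then reading off their conclusion.

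The main obstacle I anticipate is not the quoted lemma itself but its hypothesis: the statement is conditional on $K(t,\cdot)\in H^3(\mathbb D)$, and \emph{verifying} this Hardy-class membership for arbitrary traces $\tau(P),\tau(Q)$ is genuinely hard and is presumably not resolved here. I would therefore keep the result explicitly conditional, exactly as stated, and make clear that the contribution of the present paper is to reduce the full conjecture $i^*=-\chi_{orb}$ to this single analytic regularity condition on $K$, which the explicit formula for $w_t(y)$ in Proposition \ref{expression} --- showing the boundary of $\Omega_t$ stays a positive distance from $\pm1$ --- brings within reach but does not by itself settle in the general case.
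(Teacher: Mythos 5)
Your proposal is correct and coincides with the paper's treatment: the paper gives no proof of Lemma \ref{izue} at all, quoting it verbatim from \cite[Lemma 4.4]{Izu-Ued} with precisely the notational identification you check (their $H$ equals $\frac{1}{4}K^{2}$, so $K(t,\cdot)\in H^{3}(\mathbb{D})$ matches their $H^{3/2}$-type hypothesis), and keeping the statement conditional exactly as you do. Your additional reconstruction of the internals of the Izumi--Ueda argument and the appeal to Propositions \ref{atom} and \ref{continspectrum} is speculative but harmless, since none of it is needed once the lemma is invoked as a black box; those regularity facts are used only later, in Lemmas \ref{assump1} and \ref{assump2} and Proposition \ref{entropy}, to verify the hypothesis in concrete cases.
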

Let $L(t,z)$ be as in the proof of Proposition \ref{continspectrum}. 
From
\begin{align*}
(\Re L(t,z))^2\leq\Re L(t,z)\Re\left(L(t,z)+2a\frac{1-z}{1+z}+2b\frac{1+z}{1-z}\right)\leq |K(t,z)^2|,
\end{align*}
the assumption in Lemma \ref{izue} implies that $\sigma_t$ has an $L^3$-density. The converse remains true; i.e. if $\sigma_t$ has an $L^3$-density for any $t>0$, then $K(t,.)$ becomes a function of Hardy class $H^{3}(\mathbb{D})$. In fact, according to \cite[Theorem 1.7, p.208]{Con}, $L(t,.)$ is a function of Hardy class $H^{3}(\mathbb{D})$. On the other hand, from Propositions \ref{atom} and \ref{continspectrum}, we see that $L(t,.)$ has an analytic continuation across both points $\pm 1$. Moreover, the limit $\lim_{z\rightarrow\pm1}L(t,z)=0$ implies that the constant term in the power series expansion around $z=\pm1$ is zero. So that $L(t,z)\left(a\frac{1-z}{1+z}+b\frac{1+z}{1-z}\right)$ is bounded in some neighborhoods at both $\pm1$. Hence 
\begin{align*}
K(t,z)^2=L(t,z)^2+2L(t,z)\left(a\frac{1-z}{1+z}+b\frac{1+z}{1-z}\right)
\end{align*}
becomes a function  of Hardy class $H^{3/2}(\mathbb{D})$. From this discussions, we deduce that
\begin{lem}\label{assump1}
If $\sigma_t$ has an $L^3$-density for every $t>0$, then 
$
i^*\left(P:Q \right)=-\chi_{orb}\left(P,Q\right)
$.
\end{lem}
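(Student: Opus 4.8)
The plan is to verify the single hypothesis of Lemma \ref{izue}, namely that $K(t,\cdot)$ belongs to the Hardy class $H^3(\mathbb{D})$ for every $t>0$, after which that lemma gives the conclusion directly. Throughout I keep the auxiliary function $L(t,z)$ introduced in the proof of Proposition \ref{continspectrum}, so that $L(t,\cdot)$ is precisely the Herglotz transform of $\sigma_t$ and
\begin{equation*}
K(t,z)^2=L(t,z)\left(L(t,z)+2a\frac{1-z}{1+z}+2b\frac{1+z}{1-z}\right)=L(t,z)^2+2L(t,z)\left(a\frac{1-z}{1+z}+b\frac{1+z}{1-z}\right).
\end{equation*}
The whole matter reduces to controlling the two summands on the right.

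First I would translate the density hypothesis into a Hardy-class statement for $L$. Since $\sigma_t$ is assumed to carry an $L^3$-density, its boundary values lie in $L^3(\mathbb{T})$, and by the boundary-value characterisation of Hardy functions (\cite[Theorem 1.7, p.208]{Con}) this forces $L(t,\cdot)\in H^3(\mathbb{D})$. Consequently $L(t,\cdot)^2\in H^{3/2}(\mathbb{D})$, since $\int_{\mathbb{T}}|L^2|^{3/2}=\int_{\mathbb{T}}|L|^3<\infty$, which disposes of the first summand. It then remains only to show that the cross term $2L(t,z)\left(a\frac{1-z}{1+z}+b\frac{1+z}{1-z}\right)$ is again of class $H^{3/2}(\mathbb{D})$.

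The heart of the argument, and the step I expect to be the main obstacle, is handling this cross term despite the simple poles of $a\frac{1-z}{1+z}+b\frac{1+z}{1-z}$ at $z=\mp1$. Away from small neighbourhoods of $\pm1$ the rational bracket is bounded, so there the cross term is dominated by $L(t,\cdot)\in H^3\subset H^{3/2}$. Near $\pm1$ I would invoke the analytic continuation of $L(t,\cdot)$ across both $\pm1$ supplied by Propositions \ref{atom} and \ref{continspectrum}, together with $\lim_{z\to\pm1^\mp}L(t,z)=0$ established in the proof of Proposition \ref{continspectrum}: the simple zero of $L$ at each of $\pm1$ cancels the simple pole of the rational factor, so the product stays bounded in a neighbourhood of each point. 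Piecing together the two regions shows the cross term is bounded near $\pm1$ and of class $H^{3/2}$ globally, whence $K(t,z)^2\in H^{3/2}(\mathbb{D})$ and therefore $K(t,\cdot)\in H^3(\mathbb{D})$ (again because $\int_{\mathbb{T}}|K|^3=\int_{\mathbb{T}}|K^2|^{3/2}$). Feeding this into Lemma \ref{izue} yields $i^*\left(P:Q\right)=-\chi_{orb}\left(P,Q\right)$, completing the proof. The delicate point throughout is that the poles sit on $\partial\mathbb{D}$ rather than inside, so the cancellation argument depends essentially on the boundary regularity of $L$ at $\pm1$ obtained earlier, not on any interior estimate.
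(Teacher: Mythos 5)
Your proof is correct and follows essentially the same route as the paper: the same decomposition $K(t,z)^2=L(t,z)^2+2L(t,z)\left(a\frac{1-z}{1+z}+b\frac{1+z}{1-z}\right)$, Conway's boundary-value theorem to get $L(t,\cdot)\in H^3(\mathbb{D})$, the analytic continuation of $L(t,\cdot)$ across $\pm1$ with $\lim_{z\to\pm1^{\mp}}L(t,z)=0$ (Propositions \ref{atom} and \ref{continspectrum}) to cancel the simple poles of the rational factor, the conclusion $K(t,\cdot)^2\in H^{3/2}(\mathbb{D})$ hence $K(t,\cdot)\in H^{3}(\mathbb{D})$, and finally Lemma \ref{izue}. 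The only difference is presentational: you make explicit the splitting into neighborhoods of $\pm1$ versus the rest of $\overline{\mathbb{D}}$, which the paper leaves implicit.
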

Here we reprove the same result by an equivalent but more handy assumption.
\begin{pro}\label{entropy}
Assume that for every $t>0$, $H(0,\eta_t(.))$ is a function of Hardy class $H^{3}(\mathbb{D})$. Then the equality
$
i^*\left( P:Q \right)=-\chi_{orb}\left(P,Q\right)
$
holds.
\end{pro}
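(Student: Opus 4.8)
The plan is to show that the stated hypothesis forces $K(t,\cdot)$ to lie in the Hardy class $H^{3}(\mathbb{D})$, after which Lemma \ref{izue} applies verbatim and delivers the identity $i^*(P:Q)=-\chi_{orb}(P,Q)$. The bridge from the assumption on $H(0,\eta_t(\cdot))$ to the membership $K(t,\cdot)\in H^{3}(\mathbb{D})$ is provided by the exact subordination relation of Proposition \ref{subor} together with the boundedness statement of Corollary \ref{assumption}.

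First I would invoke Proposition \ref{subor} to write $K(t,z)=K(0,\eta_t(z))$ for every $z\in\mathbb{D}$. Since $\eta_t(z)\in\mathbb{D}$, substituting $y=\eta_t(z)$ into the defining formula \eqref{defK} gives
\begin{equation*}
K(t,z)^2=H(0,\eta_t(z))^2-B(z)^2,\qquad B(z):=a\frac{1-\eta_t(z)}{1+\eta_t(z)}+b\frac{1+\eta_t(z)}{1-\eta_t(z)}.
\end{equation*}
By Corollary \ref{assumption} the function $B$ belongs to $H^{\infty}(\mathbb{D})$, hence to $H^{3}(\mathbb{D})$, while the hypothesis asserts $H(0,\eta_t(\cdot))\in H^{3}(\mathbb{D})$. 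Next I would pass from this identity for the square to an $L^{3}$-control on $K(t,\cdot)$ itself: using $|K(t,z)|^{2}=|K(t,z)^{2}|$ and the elementary inequality $(p^{2}+q^{2})^{3/2}\le 2^{3/2}(p^{3}+q^{3})$ valid for $p,q\ge 0$, one obtains the pointwise majorization
\begin{equation*}
|K(t,z)|^{3}\le 2^{3/2}\left(|H(0,\eta_t(z))|^{3}+|B(z)|^{3}\right).
\end{equation*}
Integrating over the circle $|z|=r$ and taking the supremum over $r<1$, the right-hand side stays bounded because both $H(0,\eta_t(\cdot))$ and $B$ are in $H^{3}(\mathbb{D})$; consequently $K(t,\cdot)\in H^{3}(\mathbb{D})$ for every $t>0$, and Lemma \ref{izue} closes the argument.

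The genuinely substantive input is not the final $H^{3}$-estimate, which is routine, but rather the exact subordination identity $K(t,z)=K(0,\eta_t(z))$ and the fact that the M\"obius-type correction $B$ stays bounded on $\mathbb{D}$. The hard part is really packaged into Corollary \ref{assumption}, whose validity rests on Lemma \ref{distance}, i.e.\ on the boundary of $\Omega_t$ remaining at a positive distance from $\pm1$; this is precisely where the hypotheses on the atoms of $\nu_0$, and thus the generic-position properties in the general trace case, enter. I therefore expect the main care to be in checking that $B\in H^{\infty}(\mathbb{D})$ holds for arbitrary $\tau(P),\tau(Q)$, so that the composition $H(0,\eta_t(\cdot))$ and the bounded correction combine cleanly, under the analytic square root, into $K(t,\cdot)$ of class $H^{3}(\mathbb{D})$.
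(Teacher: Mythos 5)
Your proposal is correct and follows essentially the same route as the paper: the subordination identity $K(t,z)=K(0,\eta_t(z))$ of Proposition \ref{subor} combined with \eqref{defK} to write $K(t,z)^2=H(0,\eta_t(z))^2-B(z)^2$, the $H^{\infty}(\mathbb{D})$ bound on $B$ from Corollary \ref{assumption}, and then Lemma \ref{izue}. The only difference is cosmetic: you spell out the elementary pointwise estimate yielding $K(t,\cdot)\in H^{3}(\mathbb{D})$, which the paper leaves implicit when it asserts the equivalence of the two Hardy-class conditions.
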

\begin{proof}
We will prove that the assumptions $H(0,\eta_t(.))\in H^{3}(\mathbb{D})$ and $K(t,.)\in H^{3}(\mathbb{D})$ are equivalent and so we can use the result of Lemma \ref{izue}. To this end, we use the subordination relation in Proposition \ref{subor} together with \eqref{defK}, to write
\begin{align*}
K(t,z)^2=H(0,\eta_t(z))^2-\left(a\frac{1-\eta_t(z)}{1+\eta_t(z)}+b\frac{1+\eta_t(z)}{1-\eta_t(z)}\right)^2.
  \end{align*}
But, the function (see Corollary \ref{assumption})
\begin{align*}
z\mapsto \left(a\frac{1-\eta_t(z)}{1+\eta_t(z)}+b\frac{1+\eta_t(z)}{1-\eta_t(z)} \right)^2
  \end{align*} 
is of hardy class $H^{\infty}(\D)$. Hence we are done.
\end{proof}
The benefit of the above assumption  is that it transfers the necessary regularity of $\sigma_t$ and hence of $\nu_t$ for $t>0$ to an equivalent regularity for $\nu_0$ in connection with the conformal transformation $\eta_t$. 
Immediately from this assumption, we can see that the equality $i^*(P:Q)=-\chi_{orb}(P,Q)$ holds when the two initial operators $P,Q$ are assumed to be classically or freely independent. In fact, if $P,Q$ are classically independent, then $R,S$ become two independent symmetries, so that
\begin{equation*}
\tau[(RS)^n]=\tau(R^n)\tau(S^n)=f_n(0)=
\begin{cases}
1,\quad n \ {\rm  even}\\
\alpha\beta \quad  n \ {\rm odd}
\end{cases}.
 \end{equation*} 
Hence, we can compute explicitly the initial data
 \begin{equation*}
 H(0,z)=1+2\sum_{n\geq 1} f_n(0) z^n=\frac{1+2\alpha\beta z+z^2}{1-z^2}.
\end{equation*} 
Whereas, when $P$ and $Q$ are freely independent, we have from Proposition \ref{freeconv}
\begin{align*}
H(0,z)=\sqrt{1+4 z\frac{\alpha \beta \left(1+z\right)^2+ \left(\alpha-\beta \right)^2z}{\left(1-z^2\right)^2}}
\end{align*}
and in both cases, we see  that $H(0,\eta_t(.))\in H^{\infty}(\mathbb{D})$.
Here is a sample application of Proposition \ref{entropy}  improving the result in \cite[Corollary 4.5]{Izu-Ued}.
\begin{lem}\label{assump2}
Assume that $\sigma_0$ has an $L^{3}$-density with respect to $d\theta$. Then  
$
i^*\left(P:Q \right)=-\chi_{orb}\left(P,Q\right)
$.
\end{lem}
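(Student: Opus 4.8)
The plan is to deduce the statement directly from Proposition~\ref{entropy}: it suffices to show that the hypothesis that $\sigma_0$ has an $L^3$-density forces $H(0,\eta_t(\cdot))$ to belong to the Hardy class $H^3(\mathbb{D})$ for every $t>0$. The natural starting point is the additive decomposition already recorded in the proof of Proposition~\ref{continspectrum},
\begin{equation*}
H(0,z)=L(0,z)+a\frac{1-z}{1+z}+b\frac{1+z}{1-z},
\end{equation*}
where $L(0,\cdot)$ denotes the Herglotz transform of $\sigma_0$. Substituting $\eta_t$ gives
\begin{equation*}
H(0,\eta_t(z))=L(0,\eta_t(z))+a\frac{1-\eta_t(z)}{1+\eta_t(z)}+b\frac{1+\eta_t(z)}{1-\eta_t(z)},
\end{equation*}
so the problem splits into controlling the two summands separately.

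First I would dispose of the linear-fractional part. By Corollary~\ref{assumption}, the map $z\mapsto a\frac{1-\eta_t(z)}{1+\eta_t(z)}+b\frac{1+\eta_t(z)}{1-\eta_t(z)}$ already lies in $H^{\infty}(\mathbb{D})\subset H^3(\mathbb{D})$, so this term contributes nothing to the regularity obstruction. This is precisely the place where Lemma~\ref{distance} — the fact that $\eta_t$ stays away from $\pm1$ (or, in the degenerate cases $a=0$ or $b=0$, that the corresponding term vanishes identically) — enters.

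The substance lies in the first summand. Since $\sigma_0$ is assumed to possess an $L^3$-density with respect to $d\theta$, the very same argument deployed before Lemma~\ref{assump1} — namely \cite[Theorem 1.7, p.208]{Con} applied to the Herglotz transform of a measure with $L^3$-density — yields $L(0,\cdot)\in H^3(\mathbb{D})$. It then remains to pass from $L(0,\cdot)\in H^3(\mathbb{D})$ to $L(0,\eta_t(\cdot))\in H^3(\mathbb{D})$. For this I would invoke Littlewood's subordination theorem: recall from the discussion following Proposition~\ref{subor} that $\eta_t$ is an analytic self-map of $\mathbb{D}$ (indeed $\eta_t(\mathbb{D})=\Omega_t\subseteq\mathbb{D}$) fixing the origin, $\eta_t(0)=0$. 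Littlewood's principle then gives $\|L(0,\eta_t(\cdot))\|_{H^3}\le\|L(0,\cdot)\|_{H^3}<\infty$, whence $L(0,\eta_t(\cdot))\in H^3(\mathbb{D})$.

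Combining the two parts, $H(0,\eta_t(\cdot))$ is the sum of an $H^3$ function and an $H^{\infty}$ function, hence lies in $H^3(\mathbb{D})$ for every $t>0$, and Proposition~\ref{entropy} delivers the identity $i^*(P:Q)=-\chi_{orb}(P,Q)$. The main obstacle is the composition step: \emph{a priori} precomposing an $H^3$ function with a conformal map need not preserve the Hardy class, and it is only the two special features $\eta_t(0)=0$ and $\eta_t(\mathbb{D})\subseteq\mathbb{D}$ — which together make $\eta_t$ a disc self-map fixing the origin — that render Littlewood's subordination theorem applicable and resolve the difficulty cleanly.
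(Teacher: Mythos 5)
Your proposal is correct and follows essentially the same route as the paper's own proof: the same decomposition $H(0,\eta_t(z))=L(0,\eta_t(z))+a\frac{1-\eta_t(z)}{1+\eta_t(z)}+b\frac{1+\eta_t(z)}{1-\eta_t(z)}$, the same appeal to \cite[Theorem 1.7, p.208]{Con} to get $L(0,\cdot)\in H^{3}(\mathbb{D})$, Littlewood's subordination theorem for the composition with $\eta_t$, Corollary \ref{assumption} for the $H^{\infty}$ part, and the conclusion via Proposition \ref{entropy}. Your explicit remark that $\eta_t(0)=0$ and $\eta_t(\mathbb{D})\subseteq\mathbb{D}$ are what legitimize the subordination step is a welcome clarification that the paper leaves implicit.
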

\begin{proof} 
Under the assumption here and according to \cite[Theorem 1.7, p.208]{Con}, $L(0,z)$ is a function of Hardy class $H^{3}(\mathbb{D})$ and hence so does  $L(0,\eta_t(z))$ too by Littlewood's subordination theorem (see \cite[Theorem 1.7]{Dur}). 
 On the other hand, by Corollary \ref{assumption}, the function
\begin{align*}
z\mapsto a\frac{1-\eta_t(z)}{1+\eta_t(z)}+b\frac{1+\eta_t(z)}{1-\eta_t(z)} 
  \end{align*} 
is of hardy class $H^{\infty}(\D)$. Hence, 
\begin{align*}
H(0,\eta_t(z))=L(0,\eta_t(z))+a\frac{1-\eta_t(z)}{1+\eta_t(z)}+b\frac{1+\eta_t(z)}{1-\eta_t(z)}
  \end{align*}  
is of hardy class $H^{3}(\D)$ and then  we are done thanks to Proposition \ref{entropy}.
\end{proof}

We can now prove the main result of this section.

\begin{teo}
For any two projections $P,Q$, if
\begin{align*}
\mu_t-(1-\min\{\tau(P),\tau(Q)\})\delta_{0}-\max\{\tau(P)+\tau(Q)-1,0\}\delta_1
\end{align*}
has an $L^3$-density with respect to $x(1-x)dx$ on $[0,1]$ for $t=0$ or every $t>0$, then $i^*\left(P:Q \right)=-\chi_{orb}\left(P,Q\right)$.
\end{teo}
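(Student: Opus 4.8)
The plan is to reduce the statement to the already-established Lemmas \ref{assump1} and \ref{assump2} by transporting the hypothesis on $\mu_t$ to the $L^3$-regularity condition on $\sigma_t$. Recall that Lemma \ref{assump2} yields the conclusion as soon as $\sigma_0$ has an $L^3$-density with respect to $d\theta$, while Lemma \ref{assump1} yields it as soon as $\sigma_t$ has an $L^3$-density for every $t>0$. So the entire task is to show that the hypothesized $L^3$-density of $\mu_t-(1-\min\{\tau(P),\tau(Q)\})\delta_0-\max\{\tau(P)+\tau(Q)-1,0\}\delta_1$ with respect to $x(1-x)\,dx$ forces the $L^3$-density of $\sigma_t$ with respect to $d\theta$. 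The computation is the same at $t=0$ and at general $t>0$, so the two alternatives in the hypothesis are handled uniformly, invoking Lemma \ref{assump2} in the first case and Lemma \ref{assump1} in the second.

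First I would set up the change of variables $x=\cos^2(\theta/2)$ underlying the relation \eqref{measure} between $\mu_t$ and $\nu_t$. On $[0,\pi]$ one has $x(1-x)=\tfrac14\sin^2\theta$ and $dx=\tfrac12|\sin\theta|\,d\theta$, and the endpoints correspond via $\widehat{\delta_1}=\delta_0$ and $\widehat{\delta_0}=\delta_\pi$ (Remark \ref{relationship}). Comparing the atoms removed in \eqref{sigma} with those removed in the hypothesis, one sees that the measure in the hypothesis is precisely the $[0,1]$-side counterpart of $\sigma_t$. Writing its continuous part as $g(x)\,x(1-x)\,dx$ with $g\in L^3(x(1-x)\,dx)$, the density of $\mu_t$ with respect to $dx$ equals $g(x)\,x(1-x)$ off the atoms, and hence by Remark \ref{relationship} the continuous density of $\sigma_t$ with respect to $d\theta$ is proportional to $g(\cos^2(\theta/2))\,x(1-x)\,|\sin\theta|\propto g(\cos^2(\theta/2))\,|\sin\theta|^3$.

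The key estimate then compares two weighted $L^3$-integrals. On one hand, the hypothesis unwinds, under $x=\cos^2(\theta/2)$, to
\[
\int_0^1|g(x)|^3\,x(1-x)\,dx\ \propto\ \int_0^\pi\bigl|g(\cos^2(\theta/2))\bigr|^3\,|\sin\theta|^3\,d\theta<\infty .
\]
On the other hand, the $L^3$-norm of the continuous density $\rho_{\sigma_t}$ of $\sigma_t$ is
\[
\int_{-\pi}^{\pi}\bigl|\rho_{\sigma_t}(\theta)\bigr|^3\,d\theta\ \propto\ \int_0^\pi\bigl|g(\cos^2(\theta/2))\bigr|^3\,|\sin\theta|^9\,d\theta .
\]
Since $|\sin\theta|\le 1$ gives $|\sin\theta|^9\le|\sin\theta|^3$, the second integral is dominated by the first and is therefore finite. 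Thus $\sigma_t$ (respectively $\sigma_0$) has an $L^3$-density with respect to $d\theta$, and $i^*(P:Q)=-\chi_{orb}(P,Q)$ follows from Lemma \ref{assump1} (respectively Lemma \ref{assump2}).

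The only delicate point is bookkeeping: one must check that the atoms in the hypothesis match exactly those subtracted in \eqref{sigma} under $\widehat{\delta_1}=\delta_0$ and $\widehat{\delta_0}=\delta_\pi$, and track the constants in the density transformation arising from the Jacobian $|\sin\theta|$ together with the factor $x(1-x)=\tfrac14\sin^2\theta$. Conceptually, the weight $x(1-x)$ in the hypothesis is chosen precisely so that, after the substitution, the density of $\sigma_t$ acquires the extra factor $|\sin\theta|^3$; the inequality $|\sin\theta|\le 1$ then does the remaining work, and no finer analysis near $\pm1$ is required beyond what is already recorded in Propositions \ref{atom} and \ref{continspectrum}.
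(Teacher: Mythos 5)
Your proof is correct and follows essentially the same route as the paper: it transports the hypothesis through the change of variables $x=\cos^2(\theta/2)$ and the identity \eqref{sigma} to conclude that $\sigma_t$ (resp.\ $\sigma_0$) has an $L^3$-density with respect to $d\theta$, and then invokes Lemma \ref{assump1} (resp.\ Lemma \ref{assump2}). The only difference is expository: you make explicit the Jacobian bookkeeping and the key estimate $|\sin\theta|^9\le|\sin\theta|^3$, which the paper's one-line deduction leaves implicit.
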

\begin{proof} 
From  the relationship $\mu_t\leftrightsquigarrow\nu_t$ (together with Remark \ref{relationship}), the assumption here implies that
\begin{align*}
\hat{\mu_t}-(1-\min\{\tau(P),\tau(Q)\})\delta_{\pi}-\max\{\tau(P)+\tau(Q)-1,0\}\delta_0
\end{align*}
  has an $L^3$-density with respect to $d\theta$ on $\mathbb{T}=(-\pi,\pi]$ for $t=0$ or every $t>0$ and hence by \eqref{sigma}, the measure $\sigma_t$ does so also. The desired identity immediately follows from Lemma \ref{assump1} and Lemma \ref{assump2}.
\end{proof}



\begin{thebibliography}{99}
\bibitem{Bel-Ber}\emph{S.T. Belinschi, H. Bercovici}. Partially defined semigroups relative to multiplicative free convolution. {\it Int. Math. Res. Not.} {\bf 2} (2005), 65-101. 
\bibitem{Biane}\emph{P. Biane}. Free Brownian motion, free stochastic calculus and random matrices. {\it Fields. Inst. Commun.},  {\bf 12}, Amer. Math. Soc. Providence, RI,  1997. 1-19.
\bibitem{CMR}\emph{J. Cima, A. L. Matheson, W. T. Ross}. The Cauchy transform. {\it Mathematical Surveys and Monographs, 125. American Mathematical Society}.
\bibitem{Col-Kem}\emph{B. Collins, T. Kemp}. Liberation of projections. {\it J. Funct. Anal.}  {\bf 266} (2014), 1988-2052.
\bibitem{Con}\emph{J. B. Conway}. Functions of One Complex Variable II. {\it Graduate Texts in Mathematics. Springer-Verlag}. 1995. 
\bibitem{Dem-Ham-Hmi}\emph{N. Demni, T. Hamdi, T. Hmidi}. The spectral distribution of the free Jacobi process. {\it Indiana Univ. Math. Journal. 61.no. 3, (2012), 1351-1368} 
\bibitem{Dem-Hmi}\emph{N. Demni, T. Hmidi}. Spectral distribution of the free Jacobi process associated with one projection. {\it Colloq. Math.} {\bf 137}, no. 
2 (2014), 271-296. 
\bibitem{Dur}\emph{P. L. Duren}. Theory of $H^p$ Spaces. {\it Dover}, New York, 2000.
\bibitem{Dyk-Nic-Voi}\emph{K. J. Dykema, A. Nica, D. V. Voiculescu}. Free Random Variables. {\it CRM Monograph Series, 1}. 1992.
\bibitem{Hia-Pet}\emph{F. Hiai, D. Petz}. Large Deviations for Functions of Two Random Projection Matrices. {\it Acta Sci. Math. (Szeged)}  {\bf 72} (2006), 581-609.
\bibitem{Hia-Ued}\emph{F. Hiai, Y. Ueda}. A log-Sobolev type inequality for free entropy of two projections. {\it Ann. Inst. H. Poincaré Probab. Statist.}. {\bf 45} (2009), 239-249. 
\bibitem{Izu-Ued}\emph{M. Izumi, Y. Ueda.} Remarks on free mutual information and orbital free entropy. {\it Nagoya Math. J}. {\bf 220} (2015), 45-66. 
\bibitem{Law}\emph{G. F. Lawler}. Conformally Invariant Processes in the plane. {\it Mathematical Surveys and Monographs} {\bf 114}, {\it Americal Mathematical Society, Providence, RI}, 2005.   
\bibitem{Nic-Spe}\emph{A. Nica, R. Speicher}. Lectures on the Combinatorics of Free Probability. {\it London Mathematical Society Lecture Note Series, vol. 335}. 2006.   
\bibitem{Voiculescu}\emph{D. V. Voiculescu}. The analogues of entropy and of Fisher's information measure in free probability theory. VI. Liberation and mutual free information. {\it Adv. Math.} {\bf 146}, no. 2. 1999, 101-166. 
\end{thebibliography}
\end{document}